\documentclass[sn-mathphys,Numbered]{sn-jnl}

\usepackage{graphicx}
\usepackage{multirow}
\usepackage{amsmath,amssymb,amsfonts}
\usepackage{amsthm}
\usepackage{mathrsfs}
\usepackage[title]{appendix}
\usepackage{xcolor}
\usepackage{textcomp}
\usepackage{manyfoot}
\usepackage{booktabs}
\usepackage{algorithm}
\usepackage{algorithmicx}
\usepackage{algpseudocode}
\usepackage{listings}
\usepackage[utf8]{inputenc}

\theoremstyle{thmstyleone}
\newtheorem{theorem}{Theorem}[section]
\newtheorem{proposition}[theorem]{Proposition}
\newtheorem{lemma}[theorem]{Lemma}

\theoremstyle{thmstyletwo}

\newtheorem{remark}[theorem]{Remark}

\theoremstyle{thmstylethree}

\everymath{\displaystyle}

\begin{document}

\title[Generic $\mathcal{A}$-finite determinacy and singularities of homogeneous polynomial mappings]{Generic $\mathcal{A}$-finite determinacy and singularities of homogeneous polynomial mappings}

\author*[1]{\fnm{Nivaldo G.} \sur{Grulha Jr.} }\email{njunior@icmc.usp.br}

\author*[2]{\fnm{João Vítor} \sur{Pissolato} }\email{jvpissolato@usp.br}

\author*[3]{\fnm{Maria A. S.} \sur{Ruas} }\email{maasruas@icmc.usp.br}

\affil*[1,2,3]{\orgdiv{Department of Mathematics}, \orgname{Instituto de Ciências Matemáticas e de Computação}, \orgaddress{\street{Trabalhador São-Carlense Avenue, 400}, \city{São Carlos}, \postcode{13566-590}, \state{São Paulo}, \country{Brazil}}}

\abstract{
We make a detailed investigation of the generic properties that polynomial mappings possess. An important starting point is the work by Farnik, Jelonek and Ruas in 2019, where they prove some of those properties in the context of homogeneous polynomial mappings of $\mathbb{C}^3$ to $\mathbb{C}^3$, and conclude the genericity of $\mathcal{A}$-finite determinacy by applying the geometric criterion. Using their strategy, we further extend and generalize some of their key findings to dimensions greater than or equal to $2$, though some of those properties can only be extended up to dimension $4$.}

\keywords{Polynomial mappings, Generic properties, Singularities}

\maketitle	

\section{Introduction}\label{sec1}

In 1971, Mather obtained an important result about certain pairs of dimensions $(n, p)$, now referred to as Mather's nice dimensions. It is a well-known fact that the set of proper $\mathcal{A}$-stable mappings $f:\mathbb{R}^n\to \mathbb{R}^p$ is dense in the space of proper mappings equipped with the Whitney Topology when $(n,p)$ is in the set of nice dimensions. However, it is important to note that not all pairs of dimensions have this property. For example, for $n=p$, the pair $(n, n)$ does not belong to the set of the nice dimensions if $n\geq9$, see \cite{11} and \cite{12} for details.



In the complex polynomial mappings case, we refer the works made by Farnik, Jelonek and Ruas in \cite{10} and \cite{1}, where they study the set of mappings $F:\mathbb{C}^n\to\mathbb{C}^n$, for $n=2,3$, using Algebraic Geometry techniques and prove many generic properties that such mappings satisfy. We extend their results for higher dimensions, though some properties cannot be extended to every $n$.


We follow the notation given in \cite{1}. Let $d_i\geq 2$ be integers, for $1\leq i\leq n$. We denote by $\Omega(d_1,\ldots,d_n)$ the space of all mappings $F:\mathbb{C}^n\to\mathbb{C}^n$, where $F=(f_1,\ldots,f_n)$, and each $f_i$ is a polynomial in variables $x_1,\ldots,x_n$ of degree less than or equal to $d_i$. Additionally, we introduce the subspace $H(d_1,\ldots,d_n)$ of $\Omega(d_1,\ldots,d_n)$, which consists of all mappings $F=(f_1,\ldots,f_n)$, where each $f_i$ is a homogeneous polynomial of degree $d_i$. More precisely, each $f_k$ can be written as \begin{equation}\label{exp1}f_k=\sum_{i_2,\ldots,i_n}a_{i_2,\ldots,i_n;k}x_1^{d_k-i_2-\cdots-i_n}x_2^{i_2}\cdots x_n^{i_n}.\end{equation}

Both spaces $\Omega(d_1,\ldots,d_n)$ and $H(d_1,\ldots,d_n)$ can be identified with the affine space $\mathbb{C}^N$, for a large enough $N$, by considering the coefficients of each polynomial as a coordinate. Therefore, those spaces can be equipped with the Zariski Topology, and a closed set in those spaces can be characterized as the common zeroes of a finite number of polynomial relations involving those coefficients. Since the affine space is irreducible, then every non-empty open Zariski set is dense. Consequently, we can define a property in $H(d_1,\ldots,d_n)$ as \textit{generic} if it holds for every polynomial in such a set. Furthermore, due to the irreducibility of the affine space, the intersection of finitely many non-empty open sets is also a non-empty open set. Hence, an intersection of finitely many generic properties is a generic property.

\section{Basic Results}


	Let $\mathbb{C}[x_1,\ldots,x_n]$ denote the polynomial ring in $n$ variables $x_1,\ldots,x_n$ with complex coefficients. Given an ideal $I$ of $\mathbb{C}[x_1,\ldots,x_n]$, we define $\text{Z}(I)$ as  $$\text{Z}(I)=\{x\in\mathbb{C}^n\mid f(x)=0,\text{ }\forall f\in I\},$$ that is, $\text{Z}(I)$ is the set of the common solutions in $\mathbb{C}^n$ to the system of equations $f(x)=0$, for all $f\in I$. 
	
	In $\mathbb{C}^n$, we introduce the Zariski Topology, defining a subset $X\subseteq\mathbb{C}^n$ as closed if there exists an ideal $I$ of $\mathbb{C}[x_1,\ldots,x_n]$ such that $X=\text{Z}(I)$. An algebraic set is a subset that is closed in this topology.
	
	Given two affine spaces $\mathbb{C}^n$ and $\mathbb{C}^m$, we can define the Zariski Topology in the cartesian product $\mathbb{C}^n\times \mathbb{C}^m$, defining a closed set as the set of common zeroes of an ideal in the polynomial ring $\mathbb{C}[x_1,\ldots,x_n,y_1,\ldots,y_m]$.
	
	An algebraic variety in $\mathbb{C}^n$ is an irreducible algebraic set, that is, a closed set that cannot be written as an union of two proper closed sets. Since the polynomial ring is a Noetherian ring, every algebraic set can be written as a finite union of irreducible closed sets
	
	The dimension of an algebraic variety is defined as the maximum length of increasing chains of closed subsets. More generally, the dimension of an algebraic set is the maximum dimension of its irreducible components.
	
	A morphism between two algebraic sets is a mapping $f:X\to Y$, where $X\subseteq\mathbb{C}^n$ and $Y\subseteq\mathbb{C}^m$ are algebraic sets, such that there exists polynomials $f_1,\ldots,f_m\in\mathbb{C}[x_1,\ldots,x_n]$ satisfying $f(x)=(f_1(x),\ldots,f_m(x))$ for all $x\in X$. The morphism $f$ is said to be dominant if its image $f(X)$ is dense in $Y$. An isomorphism is a bijective morphism where the inverse mapping is also a morphism.
	
	Given $F\in\Omega(d_1,\ldots,d_n)$, we denote by $\text{C}(F)$ the algebraic set $\text{Z}(J(F))$, where $J(F)$ is the determinant of the Jacobian matrix of $F$. The set $C(F)$ is the critical set of $F$, which consists of points where the derivative of $F$ is not surjective. Furthermore, we denote by $\Delta(F)$ the set $F(\text{C}(F))$, which is the discriminant of $F$.
	

\section{The main results}\label{sec2}

Our first main result is a compilation of several lemmas, which we prove in the following section. In \cite{1}, the authors prove some properties that are generic in the space of homogeneous polynomial mappings $F:\mathbb{C}^3\to\mathbb{C}^3$ of degrees $(d_1,d_2,d_3)$. We extend their arguments for $F:\mathbb{C}^n\to\mathbb{C}^n$, for $n\geq2$, with some adaptations, though some of those lemmas are true only up to $n=4$.

Theorem \ref{teo1} and \ref{teo5} give properties that hold for sufficiently generic homogeneous polynomial mappings of bounded degrees. Our next result is Theorem \ref{teo6}, which is a consequence of those two theorems, showing that such generic mappings are $\mathcal{A}$-finitely determined, which follows by applying the geometric criterion for $\mathcal{A}$-finite determinancy, see \cite{5} for reference.

Furthermore, we can apply the global techniques proved by Ohmoto in \cite{7} to count the $0$-stable invariants of a generic polynomial mapping $F:\mathbb{C}^4\to\mathbb{C}^4$.

\begin{theorem}\label{teo1}
	
Assume that $\text{gcd}(d_i,d_j)\leq2$ and $\text{gcd}(d_i,d_j,d_k)=1$, for $1\leq i< j<k\leq n$. Then there exists a dense Zariski open set $U\subseteq H(d_1,\ldots,d_n)$ such that, for every $F\in U$, the following properties holds for $F$: 	

\begin{enumerate}
	
\item $F^{-1}(0)=\{0\}$;

\item If $d_i$, for $1\leq i\leq n$, are pairwise coprime, then $F$ restricted to any ray contained in $\text{C}(F)$ is injective. If a pair is not coprime, that is, $d_{i_1}$ and $d_{i_2}$ are even and  $d_{i_3},\ldots,d_{i_n}$ are odd, then $F$ restricted to any ray contained in $\text{C}(F)\setminus\text{Z}(f_{i_j})$ is injective, for $3\leq j\leq n$, and $F$ restricted to any of the finite number of rays contained in $\text{C}(F)\cap\text{Z}(f_{i_3},\ldots,f_{i_n})$ is two-to-one.

\item $F\big|_{\text{C}(F)}$ is injective outside of a set of dimension less than or equal to $n-2$ in $\mathbb{C}^n$.

\item If $p\in\Delta(F)$, then $|F^{-1}(p)\cap\text{C}(F)|\leq n-1$.
	
\end{enumerate}
	
\end{theorem}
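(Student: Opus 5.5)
Each of the four properties will be shown to hold on its own non-empty Zariski open subset of $H(d_1,\ldots,d_n)$; $U$ is their intersection, which is again dense and open by the irreducibility remark in the Introduction. The common device is an incidence-variety dimension count. For a property P, let $\Sigma\subseteq H(d_1,\ldots,d_n)\times(\text{parameter space})$ be the algebraic set of pairs (mapping, bad configuration) witnessing the failure of P. The configurations are points of $\mathbb{P}^{n-1}$, pairs of points, or rays. We project $\Sigma$ to the configuration space and bound the dimension of each fibre by counting the independent linear conditions imposed on the coefficients $a_{i_2,\ldots,i_n;k}$ in (\ref{exp1}). If $\dim\Sigma<\dim H$, then the projection of $\Sigma$ to $H$ is not dominant. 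Its closure is then a proper closed set, and its complement is the required open set. Homogeneity lets us work projectively: $F^{-1}(0)$, $C(F)$ and the fibres are cones, so configurations may be taken up to scaling.

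\textbf{Property 1.} $F^{-1}(0)=\{0\}$ fails iff $n$ homogeneous forms in $n$ variables have a common zero in $\mathbb{P}^{n-1}$. This is the vanishing of the classical resultant $\mathrm{Res}(f_1,\ldots,f_n)$, a nonzero polynomial in the coefficients (take $f_i=x_i^{d_i}$). Hence the condition is Zariski open and non-empty.

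\textbf{Property 2.} Fix a line through $v$ with $F(v)\neq0$. Then $F(tv)=(t^{d_1}f_1(v),\ldots,t^{d_n}f_n(v))$. Suppose $F(tv)=F(sv)$ with $t\neq s$ and $u=t/s$. Then $u^{d_k}=1$ for every $k$ with $f_k(v)\neq0$, so $u$ is a root of unity of order dividing $g=\gcd\{d_k: f_k(v)\neq0\}$. If $g=1$ the restriction is injective, so non-injectivity forces $f_k(v)=0$ for all $k$ outside an index set with common gcd $\geq2$. By the hypotheses $\gcd(d_i,d_j,d_k)=1$ and $\gcd(d_i,d_j)\leq2$, this index set has size at most two and its gcd is $2$. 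Consequently, on $C(F)$ the set where injectivity can fail is the ray set $C(F)\cap Z(f_k:k\neq i,j)$, which consists of $n-1$ conditions in $\mathbb{P}^{n-1}$. For generic $F$ this set is empty unless $d_i,d_j$ are both even. In that case we use the generic count: $Z(f_k:k\neq i,j)$ is a finite set of lines, and generically $J(F)$ vanishes on none of them. So when all $d_i$ are pairwise coprime the bad set is empty, and otherwise exactly those finitely many rays remain, on which $F$ is two-to-one since $u=-1$. The statement as written suggests these rays do lie in $C(F)$. I would check, via a local computation of $J(F)$ along such a line, whether $J(F)$ is forced to vanish there or is merely allowed to; I expect this computation to explain the finiteness claim.

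\textbf{Properties 3 and 4.} Both concern multiple points on the critical set. Consider the incidence set of $(F,x_1,\ldots,x_r)$ with distinct $x_i\in C(F)$, not on a common ray, and $F(x_1)=\cdots=F(x_r)$. For fixed points in general position, the conditions $J(F)(x_i)=0$ and $F(x_i)=F(x_1)$ are generically independent linear or polynomial conditions on the coefficients, giving codimension $r+n(r-1)$. The configuration space has dimension $nr$. So the set of $r$-tuples for generic $F$ has dimension $nr-r-n(r-1)=n-r$. Taking $r=2$, double points of $F|_{C(F)}$ form a set of dimension $n-2$, which gives Property 3. Taking $r=n+1$ gives a negative dimension, so no $(n+1)$-tuples occur and Property 4 follows. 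Points on a common ray are already handled by Property 2.

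The main obstacle is justifying independence of the conditions when points are special: points lying on coordinate subspaces, or collinear configurations, where the Veronese evaluation vectors of the $x_i$ in degrees $d_k$ and $d_k-1$ (for the Jacobian) become dependent. I would stratify the configuration space by the rank of the evaluation map and verify stratum by stratum that the loss of conditions is compensated by the smaller stratum dimension. That is where I expect the restrictions to small $n$ and the gcd hypotheses to enter. The Jacobian condition is the delicate one, since $J(F)$ is not linear in the coefficients. I would fix all but one component, use that $J$ is linear in the coefficients of each $f_k$ separately, and argue fibrewise.
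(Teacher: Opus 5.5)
Your resultant argument for Property 1 is a valid substitute for the paper's incidence count, and elsewhere you follow the paper's incidence-variety strategy. There are, however, two genuine gaps.

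\textbf{Property 2.} This is where the proposal breaks down, because of a dimension miscount. $\text{Z}(f_k : k\neq i,j)$ is cut out by $n-2$ forms. It is therefore a cone of dimension at least $2$, i.e.\ at least a curve in $\mathbb{P}^{n-1}$, not a finite set of lines. Such a curve always meets $\text{C}(F)$, since $J(F)$ is a form of positive degree $\sum_k(d_k-1)$. So $\text{C}(F)\cap\text{Z}(f_k : k\neq i,j)$ is never $\{0\}$, for any pair $i,j$. It is cut out by $n-1$ forms and, for generic $F$, is a nonempty finite union of lines. That is all the finiteness claim in the statement says. The paper proves it by the same incidence count used everywhere else: over $e_1$ the conditions are $a_{0,\ldots,0;k}=0$ for $k\geq 3$ plus $J(F)(e_1)=0$, which has codimension $n-1$, so generic fibres of $\pi_2$ are $1$-dimensional. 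The local computation you propose, checking whether $J(F)$ is forced to vanish on such a line, looks for a mechanism that does not exist.

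What you actually need on those finitely many lines is that $f_i$ and $f_j$ are both nonzero there, and this is the count you skipped. You assert that $S=\{k : f_k(v)\neq 0\}$ has at most two elements and gcd $2$, but you never exclude $|S|=1$. In that case $F$ restricted to the ray is $d_k$-to-one with $d_k\geq 2$. That breaks injectivity even in the pairwise coprime case, and it would also spoil ``exactly two-to-one'' on the special rays. Excluding it means proving $\text{C}(F)\cap\text{Z}(f_{j_1},\ldots,f_{j_{n-1}})=\{0\}$ for generic $F$, i.e.\ that $J(F)=0$ is independent of the $n-1$ linear conditions. The paper checks this explicitly: on $\{a_{0,\ldots,0;k}=0,\ k\leq n-1\}$, $J(F)(e_1)$ equals $\pm d_n a_{0,\ldots,0;n}$ times an $(n-1)\times(n-1)$ minor in other coefficients.

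\textbf{Properties 3 and 4.} Your count $n-r$ agrees with the paper's, but Property 4 stops one step short: $r=n+1$ only yields $|F^{-1}(p)\cap\text{C}(F)|\leq n$. The bound $n-1$ needs $r=n$. There the expected dimension is $0$. The solution set is invariant under simultaneous scaling $(x_1,\ldots,x_n)\mapsto(\lambda x_1,\ldots,\lambda x_n)$, because $F(\lambda x)=(\lambda^{d_k}f_k(x))_k$ and $J(F)$ is homogeneous. A $0$-dimensional set of that kind contains no tuple of nonzero points. This is the paper's homogeneity step. Equivalently, work modulo simultaneous scaling as you announced, which turns the count into $n-r-1$.

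The general-position independence you assert is where the real work lies. It uses neither small $n$ nor the gcd hypotheses. The paper verifies it explicitly at $(e_1,e_2)$ and at $(e_1,\ldots,e_n)$ by expanding the Jacobian conditions along different columns.

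Finally, pairs $x,-x$ on a common critical ray are not excluded by Property 2; they are only confined to the special rays. For $n\geq 3$ this is harmless. For $n=2$ with $d_1,d_2$ both even, however, every critical ray is special, so Properties 3 and 4 genuinely fail there. The paper's proof, which only treats linearly independent pairs, misses this case too.
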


\begin{theorem}\label{teo5}

There exists a dense Zariski open set $U\subseteq H(d_1,d_2,d_3,d_4)$ such that, for every $F\in U$, the following properties holds for $F$:

\begin{enumerate}

\item Outside the origin, the singularities of $F$ are only $A_1$ (fold), $A_2$ (cusp) and $A_3$ (swallowtail). In particular, $\text{C}(F)\setminus\{0\}$ is smooth.

\item If $F$ has a swallowtail in $p$, then $F^{-1}(F(p))\cap \text{C}(F)=\{p\}$.

\item If $|F^{-1}(p)\cap\text{C}(F)|\geq2$, then the hypersurface $\Delta(F)$ has a normal crossing at $p$.

\end{enumerate}
	
\end{theorem}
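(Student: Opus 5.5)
We plan to prove Theorem \ref{teo5} by the standard incidence-variety and dimension-count strategy of \cite{1}, now in dimension four. For each property we exhibit a closed algebraic subset $\Sigma\subseteq H(d_1,d_2,d_3,d_4)$ containing every mapping for which the property fails, and show $\dim\Sigma<\dim H$. The complement of the closure of $\Sigma$ is then a dense Zariski open set, and the intersection of the three open sets gives $U$. Homogeneity is used throughout. The critical set $\text{C}(F)$ is a cone, and the $\mathbb{C}^*$-action $t\cdot x$ induces $F(tx)=(t^{d_1}f_1(x),\ldots,t^{d_4}f_4(x))$. Hence bad loci are unions of rays, and we may projectivize, working with points of $\mathbb{P}^3$, or normalize $x_1=1$ after a linear change of coordinates. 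This removes one dimension from each count.

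For property 1 we work with jet spaces. Consider the incidence set
\[
\Gamma=\{(F,x)\in H\times(\mathbb{C}^4\setminus\{0\}) : j^kF(x)\in W\},
\]
where $W$ is the closed union of the Thom--Boardman/Morin strata of $J^k(4,4)$ not of type $A_1,A_2,A_3$. These are $\Sigma^2$ (codimension $4$), $A_4$ (codimension $4$), and the non-transverse loci inside $\Sigma^{1}$. Each has codimension at least $5$ in the jet space, or at least $4$ for $\Sigma^2$ and $A_4$. Since $d_i\ge 2$, the evaluation map $H\to J^k_x(4,4)$, $F\mapsto j^kF(x)$, is a surjective linear map modulo the Euler relation. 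We check this by translating to $x=e_1$ and using the monomials $x_1^{d_k-|I|}x^I$. Because of the Euler relation, we must instead count codimension in the image, which is $J^k$ restricted by the equation $\sum x_i\partial_iF=(d_kf_k)_k$. The fibres of $\Gamma\to\mathbb{C}^4\setminus\{0\}$ therefore have codimension at least $4$, so $\dim\Gamma\le \dim H-4+4$, and after dividing by the $\mathbb{C}^*$-orbits $\dim\Gamma\le\dim H-1$. Its projection to $H$ is thus a proper subset. The main obstacle lies here. The Euler identity forces the radial direction $x$ to satisfy $dF_x(x)=(d_kf_k(x))$, so the jet at $x$ is not arbitrary. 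We must verify that the image of the evaluation map meets $W$ in the expected codimension. Concretely, we reduce at $e_1$ to the jets of $F(1,y)$ in the affine chart, which are arbitrary polynomials in $y\in\mathbb{C}^3$ of the given degrees. The derivative in $x_1$ is then determined by Euler, and we redo the classification in these coordinates. This computation is also what requires $n=4$. The smoothness of $\text{C}(F)\setminus\{0\}$ follows because, for Morin singularities, $J(F)$ vanishes to first order, which is part of the transversality condition.

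For properties 2 and 3 we use multi-incidence sets. Let $\Gamma_2\subseteq H\times(\mathbb{P}^3)^2_{\neq}\times(\mathbb{C}^*)$ consist of data $(F,[x],[y],\lambda)$ such that $x,y$ are critical, $F(x)=F(\lambda\cdot y)$ (using the weighted action to fix representatives), and $x$ is a swallowtail point. The conditions are: criticality at each point ($1+1$), four equalities of images, and swallowtail at $x$ (codimension $3$ inside $\text{C}(F)$). This totals $9$ independent conditions against the $3+3+1=7$ free parameters. Independence follows from Theorem \ref{teo1}(1) and the separation of jets at distinct non-proportional points by homogeneous polynomials of degree $\ge 2$; the proportional case is handled by Theorem \ref{teo1}(2). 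For property 3 we treat $r=2,3,4$ points $x^1,\ldots,x^r$ with common image. Normal crossing fails exactly when the kernels of the differentials, that is, the tangent hyperplanes of the discriminant branches $dF(T_{x^i}\text{C}(F))$, are linearly dependent in a non-generic way, or when some $x^i$ is a cusp or swallowtail. Each degeneracy imposes at least one extra condition beyond the expected count $r+4(r-1)$ against $3r+(r-1)$ parameters. We also use Theorem \ref{teo1}(4), which bounds $r\le 3$. Showing independence of these extra conditions reduces, as in \cite{1}, to choosing the degree-$\ge2$ parts of $F$ at the distinct points independently, which is possible by Lagrange-type interpolation of homogeneous polynomials at finitely many non-proportional points.
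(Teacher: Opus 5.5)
Your overall strategy (incidence varieties, the fibre-dimension theorem, and the $\mathbb{C}^*$-action to push bad loci to the origin) is the paper's, but there are genuine gaps.

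\textbf{Property 3.} The count does not close in the main case $r=2$. In your quotient parametrization two points carry $3\cdot 2+1=7$ parameters against $2+4=6$ expected conditions, so there is one dimension of slack. A degeneracy costing only ``at least one extra condition'' therefore gives $\dim\Gamma\leq\dim H$, which still allows $\Gamma$ to dominate $H$.

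Your own list of degeneracies shows this is not a technicality. A cusp at $x^1$ and a fold at $x^2$ with the same image costs exactly one extra condition, so your count predicts finitely many such rays for generic $F$, not none. This is the stable bigerm $A_1A_2$ (codimension $3$ in the target). It must not be excluded, since it is not a failure of the crossing condition: the paper's Lemma \ref{lemma7} only asks that the image hyperplanes $d_pF(\mathbb{C}^4)$ and $d_qF(\mathbb{C}^4)$ differ.

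What you need is that coincidence of the two hyperplanes costs two conditions. The paper gets this from the Euler identity. Once $F(e_1)=F(e_2)$, both differentials share the column $(d_kf_k(e_1))_k$, so non-transversality becomes $\text{rank}(B)\leq 3$ for the $4\times 7$ matrix $B$. This is a codimension-$4$ condition, giving $8$ conditions on the $8$-dimensional space of pairs ($8>7$ after dividing by $\mathbb{C}^*$).

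\textbf{Property 2 and proportional pairs.} Your total of $9$ double-counts criticality at $x$: the swallowtail locus has codimension $3$ in $\mathbb{C}^4$, not in $\text{C}(F)$. The correct count $8>7$ still suffices. You are right that proportional pairs need separate treatment; the paper simply assumes $p_1,p_2$ linearly independent. However, Theorem \ref{teo1}(2) carries gcd hypotheses that Theorem \ref{teo5} lacks. If $\gcd(d_1,\dots,d_4)=d>1$, then $F(\xi p)=F(p)$ as in Remark \ref{rmk1}, so property 2 fails on every swallowtail ray. Even under the gcd hypotheses, you still need a count keeping swallowtails and coincident hyperplanes off the finitely many rays of $\text{C}(F)\cap\text{Z}(f_{i_3},f_{i_4})$ on which $F(-p)=F(p)$.

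\textbf{Property 1.} The decisive step is announced rather than carried out. You say yourself that one must check that the bad jets meet the Euler-constrained image of $H$ in codimension $4$, and that the classification has to be redone in the chart $F(1,y)$.

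The surjectivity you invoke does not settle this. Since $f_k(1,y)$ has degree at most $d_k$, its jets of order $>d_k$ are not free (for $d_k=2$ they vanish), whereas the $A_{\geq 4}$ condition involves $4$-jets. The same objection applies to the ``separation of jets at distinct points'' you use for properties 2 and 3: a quadratic form, for instance, is determined by its $2$-jet at $e_1$.

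Nor do codimensions computed in $J^k(4,4)$ transfer verbatim. By Euler, $\nabla J(F)(p)\cdot p=0$ at every critical point, so singular points of $\text{C}(F)$ cost $1+3$ conditions rather than $5$.

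The paper provides an explicit computation at $e_1$. There, $J(F)$, $J_{1,k}(F)$, $J_{2,k}(F)$, $J_{3,k}(F)$ have a triangular Jacobian with respect to $a_{1,0,0;k},\dots,a_{4,0,0;k}$, with determinant $288m_{k,1}^{10}$. This gives codimension $4$ off $\text{Z}(m_{k,1})$, and the common zero set of all the $3\times3$ minors $m_{k,j}$ is treated as a codimension-$4$ determinantal set.

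That computation itself uses the coefficients $a_{4,0,0;k}$ (so $d_k\geq 4$), and the $J_{k,i}$ only detect the contact class. Your plan to include the transversality loci, which is what smoothness of $\text{C}(F)\setminus\{0\}$ actually requires, is therefore more careful than the paper. But an explicit codimension verification of this kind, valid for the degrees in question, is precisely what your proposal is missing.
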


\begin{theorem}\label{teo6}
	If $\text{gcd}(d_i,d_j)\leq 2$, for $1\leq i<j\leq 4$, and $\text{gcd}(d_i,d_j,d_k)=1$, for $1\leq i<j<k\leq 4$, then there exists a dense Zariski open set $U\subseteq H(d_1,d_2,d_3,d_4)$ such that, for every $F\in U$, we have that the germ $F:(\mathbb{C}^4,0)\to(\mathbb{C}^4,0)$ is $\mathcal{A}$-finitely determined.
	
	Moreover, if $\text{gcd}(d_1,d_2,d_3,d_4)>1$, then there are no homogeneous polynomial mapping germ of degrees $d_1,d_2,d_3,d_4$ that is $\mathcal{A}$-finitely determined.
\end{theorem}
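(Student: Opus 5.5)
The plan is to apply the Mather--Gaffney geometric criterion for $\mathcal{A}$-finite determinacy (see \cite{5}). A germ $F:(\mathbb{C}^4,0)\to(\mathbb{C}^4,0)$ with $F^{-1}(0)=\{0\}$ is $\mathcal{A}$-finitely determined if and only if it has an isolated instability at the origin. Concretely, there should be a representative $F:U\to V$, finite, such that for every $y\in V\setminus\{0\}$ the multigerm of $F$ at $F^{-1}(y)\cap \mathrm{C}(F)$ is stable. We take $U$ to be the intersection of the dense Zariski open sets given by Theorem \ref{teo1} (with $n=4$) and Theorem \ref{teo5}. This intersection is again dense and open.

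\emph{First part.} Fix $F\in U$. By Theorem \ref{teo1}(1), $F^{-1}(0)=\{0\}$, so by homogeneity $F$ is a finite (proper) map, and the germ at $0$ is finite. Take $y\neq0$ in $\Delta(F)$. Every point of $F^{-1}(y)\cap\mathrm{C}(F)$ is nonzero, so by Theorem \ref{teo5}(1) each monogerm is of type $A_1$, $A_2$ or $A_3$, and these are stable monogerms in $(4,4)$. It remains to check that the multigerm is stable. By Theorem \ref{teo1}(4) there are at most $3$ critical points over $y$, and by Theorem \ref{teo5}(2) a swallowtail point is the only critical point in its fibre. By Theorem \ref{teo5}(3), whenever there are at least two critical points the branches of $\Delta(F)$ through $y$ cross normally. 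Stability of a multigerm is equivalent to stability of each branch together with transversality (regular intersection) of the images of the branches' stable strata. So the remaining task is to check that normal crossing of the discriminant branches gives exactly this for the allowed combinations $A_1A_1$, $A_1A_2$, $A_1A_1A_1$, $A_2A_2$, $A_1A_1A_2$, and so on.

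\emph{Main obstacle.} The delicate case is when a cusp is present, since then the discriminant branch is singular along the cusp locus. Here I would interpret Theorem \ref{teo5}(3) as normal crossing in the stratified sense: the image of the cusp stratum, a codimension-$2$ smooth germ, is transverse to the other smooth discriminant branches. I would also check that the number of branches fits the dimension count. For instance, $A_2A_2$ has codimension $4$ and is isolated, but transversality still holds. With this, every multigerm over $y\neq0$ is stable, and the geometric criterion yields finite determinacy.

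\emph{Second part.} Suppose $g=\gcd(d_1,\dots,d_4)>1$ and let $\zeta$ be a primitive $g$-th root of unity. Then $F(\zeta x)=F(x)$ for every $x$, so $F$ is constant on the orbits of $x\mapsto\zeta x$. There are two cases.
\begin{itemize}
\item If $F^{-1}(0)\neq\{0\}$, the germ is not finite. Since $\mathcal{A}$-finite determinacy implies finiteness via the criterion, it is not finitely determined.
\item Otherwise, consider $\mathrm{C}(F)$. It is defined by $J(F)$, which is homogeneous of degree $\sum(d_i-1)\geq 4>0$, so it is a hypersurface cone of dimension $3$ through the origin. Pick $x\in\mathrm{C}(F)\setminus\{0\}$. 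Then $\zeta x\neq x$, and since $J(F)(\zeta x)$ is a nonzero multiple of $J(F)(x)$, also $\zeta x\in\mathrm{C}(F)$. So every point of $\Delta(F)\setminus\{0\}$ has at least two critical preimages, each lying in the same sheet of $\Delta(F)$ (image of the same local branch). Hence the images of $\mathrm{C}(F)$ near $x$ and near $\zeta x$ coincide rather than meet transversally. The multigerm at $\{x,\zeta x\}$ is therefore unstable at every point of the $3$-dimensional set $\Delta(F)\setminus\{0\}$. This means the instability is not isolated, and the geometric criterion shows $F$ is not $\mathcal{A}$-finitely determined.
\end{itemize}
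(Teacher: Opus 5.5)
\textbf{Main gap: the gcd hypotheses are never used.} Your first part follows the same plan as the paper: the geometric criterion, fed by Theorems \ref{teo1} and \ref{teo5}. But the only inputs you actually use are items (1) and (4) of Theorem \ref{teo1} and Theorem \ref{teo5}. Their proofs (Lemmas \ref{lemma1}, \ref{lemma4}, \ref{lemma5}--\ref{lemma7}) carry no degree hypotheses, so the conditions $\gcd(d_i,d_j)\le2$ and $\gcd(d_i,d_j,d_k)=1$ play no role. Your argument would then give $\mathcal{A}$-finiteness for, say, $(2,2,2,2)$, which your own second part refutes.

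The failing step is applying Theorem \ref{teo5}(3) to two critical points on the same ray. Lemma \ref{lemma7} is proved by computing the fibre over $(e_1,e_2)$, so it only controls linearly independent pairs. For $q=\lambda p$ one has $F(\lambda x)=D_\lambda F(x)$ with $D_\lambda=\mathrm{diag}(\lambda^{d_1},\ldots,\lambda^{d_4})$. Hence the discriminant sheet at $q$ is $D_\lambda$ applied to the sheet at $p$, and it coincides with it when all $\lambda^{d_i}=1$, which is exactly what you exploit in your second part. The paper's proof brings in the ray behaviour of Theorem \ref{teo1}(2) (``$F$ is a two-to-one mapping'') precisely at this point. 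Under the gcd hypotheses, the only coincidences on rays are $F(-p)=F(p)$ along the finitely many rays of $\mathrm{C}(F)\cap\mathrm{Z}(f_{i_3},f_{i_4})$ when $d_{i_1},d_{i_2}$ are even, and there are none if the degrees are pairwise coprime. On those rays you still have to check that the bigerm at $\{p,-p\}$ is stable: $p$ must be a fold and $D_{-1}\,d_pF(\mathbb{C}^4)\neq d_pF(\mathbb{C}^4)$. This is one more incidence count of the kind used in Lemmas \ref{lemma6} and \ref{lemma7}.

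\textbf{Codimension-4 multigerms.} Your remark that $A_2A_2$ ``is isolated, but transversality still holds'' is wrong in this homogeneous setting. Every stratum image through $y=F(p)\neq0$ contains the weighted orbit $\lambda\mapsto(\lambda^{d_1}y_1,\ldots,\lambda^{d_4}y_4)$. So its tangent space contains the Euler vector $(d_1y_1,\ldots,d_4y_4)=d_pF(p)\neq0$. Consequently no multigerm of total codimension $4$ can be in general position away from $0$, and such multigerms must be shown \emph{not to occur}:
\begin{itemize}
\item $A_1A_3$ and $A_1^4$ are excluded by Theorems \ref{teo5}(2) and \ref{teo1}(4).
\item $A_2^2$ and $A_1^2A_2$ are not excluded by any stated result. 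They need counts as in Lemma \ref{lemma6}, for instance $4+2+2=8$ conditions on the $8$-dimensional space of pairs for $A_2^2$.
\end{itemize}
Likewise, for $A_1A_2$ and $A_1^3$ the ``stratified'' normal crossing you assume is strictly stronger than the pairwise condition $d_pF(\mathbb{C}^4)\neq d_qF(\mathbb{C}^4)$ that Lemma \ref{lemma7} proves. The cusp $2$-plane may equal $d_pF(\mathbb{C}^4)\cap d_qF(\mathbb{C}^4)$, and three pairwise distinct hyperplanes may share a $2$-plane. The paper's proof is equally silent here, but these should be stated and proved as extra genericity counts, not presented as a reading of Theorem \ref{teo5}(3).

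\textbf{Second part.} This is correct and takes a different route from the paper. The paper cites that $\mathrm{C}(F)\to\Delta(F)$ is a normalization, hence generically one-to-one, for an $\mathcal{A}$-finite germ, which contradicts Remark \ref{rmk1}. You instead exhibit the unstable bigerm at $\{x,\zeta x\}$ over every point of $\Delta(F)\setminus\{0\}$ and apply the geometric criterion directly. Your version is more self-contained and also handles the non-finite case explicitly.
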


\section{Results and proofs}\label{sec3}

The strategy to prove our main results are based on the application of Algebraic Geometry techniques. For the sake of completeness, we provide a detailed review of essential definitions, notations, and important facts for the following proofs.

\begin{remark}


By the Hilbert's Basis Theorem, the ideal $I$ is finitely generated. If we denote this ideal as $I=\langle f_1,\ldots,f_k\rangle$, then the zero set $\text{Z}(I)$ is denoted by $$\text{Z}(f_1,\ldots,f_k)=\{x\in\mathbb{C}^n\mid f_1(x)=\cdots=f_k(x)=0\}.$$ Let $F\in H(d_1,\ldots,d_n)$. Since each $f_i$ is homogeneous, their partial derivatives are also homogeneous, implying that the jacobian determinant $J(F)$ is a homogeneous polynomial. Consequently, $\text{C}(F)$ is composed of lines passing through the origin in $\mathbb{C}^n$.

Since we are in the equidimensional case, the critical set of $F$ coincides with the singular set of $F$, where the singular set is the set of points where the differential of $F$ fails to be both injective and surjective.

\end{remark}

We frequently apply the Theorem of Dimension of the Fibres, which can be found in \cite[4.4 Theorem on the Dimension of the Fibres, p.228]{2}, to prove our results. Herein, we state the theorem for reference:

\begin{theorem}[Dimension of the Fibres]\label{tdf}
Let $X$ and $Y$ be two algebraic sets, and let $f:X\to Y$ be a dominant morphism (particularly, when $f$ is surjective). Then, for all $x\in X$, we have $$\text{dim}(f^{-1}(f(x)))\geq \text{dim}(X)-\text{dim}(Y).$$ Furthermore, there exists a dense Zariski open set $U\subseteq Y$ such that, for every $y\in U$, the following equality holds: $$\text{dim}(f^{-1}(y))=\text{dim}(X)-\text{dim}(Y).$$
\end{theorem}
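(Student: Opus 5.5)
Since this is the classical theorem on the dimension of the fibres, I would follow the standard argument (as in the cited reference) and first reduce to the case where $X$ and $Y$ are irreducible affine varieties. Dimension is local and is the maximum over irreducible components, and a dominant morphism sends some component of $X$ of maximal dimension dominantly onto $Y$. In the reducible case the pointwise inequality should be read for $x$ lying on such a component; this is the form in which it is used later in the paper. Write $n=\dim X$ and $m=\dim Y$. Dominance gives an inclusion of function fields $\mathbb{C}(Y)\hookrightarrow\mathbb{C}(X)$, so $\operatorname{trdeg}_{\mathbb{C}(Y)}\mathbb{C}(X)=n-m$. In particular $n\geq m$.

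\emph{Lower bound.} Fix $x\in X$ and set $y=f(x)$. I would choose regular functions $g_1,\ldots,g_m$ on an affine neighbourhood of $y$ such that $y$ is an isolated point of $Z(g_1,\ldots,g_m)$. This is done inductively: take $g_i$ vanishing at $y$ but not identically on any component through $y$ of $Z(g_1,\ldots,g_{i-1})$. Krull's principal ideal theorem then drops the dimension by exactly one at each step, so after $m$ steps only isolated points remain near $y$. Pulling back, $f^{-1}(y)$ coincides near $x$ with $Z(g_1\circ f,\ldots,g_m\circ f)$. By Krull again, every component of this set through $x$ has dimension at least $n-m$. Hence $\dim f^{-1}(f(x))\geq n-m$.

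\emph{Generic equality.} By the lower bound it suffices to find a dense open $U\subseteq Y$ over which every non-empty fibre has dimension at most $n-m$. I would pass to affine opens and write $\mathbb{C}[X]=\mathbb{C}[Y][u_1,\ldots,u_N]$. I would pick $t_1,\ldots,t_{n-m}$ among the $u_j$ forming a transcendence basis of $\mathbb{C}(X)$ over $\mathbb{C}(Y)$. Each $u_j$ is then algebraic over $\mathbb{C}(Y)(t_1,\ldots,t_{n-m})$. After clearing denominators and inverting a single nonzero $h\in\mathbb{C}[Y]$ that collects the leading coefficients, the $u_j$ become integral over $\mathbb{C}[Y]_h[t_1,\ldots,t_{n-m}]$. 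To get monic equations one may also need a preliminary linear change of the $t$'s, as in Noether normalization.

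The map $f^{-1}(D(h))\to D(h)\times\mathbb{C}^{n-m}$ given by $(f,t)$ is therefore finite. It follows that for each $y\in D(h)$ the fibre $f^{-1}(y)$ maps finitely to $\mathbb{C}^{n-m}$, so $\dim f^{-1}(y)\leq n-m$. Finally, dominance makes $f(X)$ constructible and dense, so it contains a dense open subset of $Y$. Intersecting that subset with $D(h)$ gives the required $U$, on which the fibres are non-empty and of dimension exactly $n-m$.

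The main obstacle is this relative Noether normalization over an open subset of $Y$. One must check that inverting a single $h$ really makes all generators integral over the polynomial ring, and not just algebraic over its fraction field. I would handle this by applying the usual Noether normalization over the field $\mathbb{C}(Y)$ to the finitely many generators and then spreading out: the finitely many denominators and leading coefficients that appear all lie in $\mathbb{C}[Y]$, and $h$ is taken to be their product.
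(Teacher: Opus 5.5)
The paper does not prove this theorem; it only quotes it from the cited textbook, and the standard textbook versions assume $X$ and $Y$ irreducible. For irreducible (affine) $X$ and $Y$ your outline is the usual proof and is correct:
\begin{itemize}
\item the lower bound comes from a system of parameters $g_1,\ldots,g_m$ at $y$ together with Krull's height theorem on $X$;
\item the upper bound comes from Noether normalization over the field $\mathbb{C}(Y)$, spread out over some $D(h)$, with the denominators of the $t_i$ themselves also absorbed into $h$;
\item non-emptiness of the generic fibre comes from Chevalley.
\end{itemize}
You could even skip Chevalley. The extension $\mathbb{C}[Y]_h[t_1,\ldots,t_{n-m}]\subseteq\mathbb{C}[X]_h$ is injective and integral, so lying-over already makes $f^{-1}(D(h))\to D(h)$ surjective.

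The gap is in your reduction paragraph. The assertion that a dominant morphism sends some irreducible component of $X$ of maximal dimension dominantly onto $Y$ is false. Take $X=\{y=z=0\}\cup\{z=1\}\subseteq\mathbb{C}^3$ (a line and a disjoint plane), $Y=\mathbb{C}$ and $f(x,y,z)=x(1-z)$. Then $f$ is surjective and $\dim X-\dim Y=1$. However, the plane is collapsed to $0$, and $f^{-1}(c)=\{(c,0,0)\}$ is a single point for every $c\neq0$.

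So for reducible $X$ the generic equality fails as well, not only the pointwise inequality. The same happens for reducible $Y$: take a plane fibred over one branch of $\{uv=0\}$ plus a line mapped isomorphically onto the other branch. The statement as written is therefore false, and no reduction can prove it.

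You should instead add a hypothesis: $Y$ is irreducible and some component of $X$ of dimension $\dim X$ dominates $Y$ (e.g.\ $X$ and $Y$ irreducible). Under that hypothesis your reduction works. Apply the irreducible case to each dominant component, note that non-dominant components have empty fibres over a dense open set, and intersect the finitely many open sets.

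The pointwise inequality, on the other hand, holds for every $x$ on a component of dimension $\dim X$, dominant or not. Running your Krull argument on that component with $\dim_y Y\leq\dim Y$ functions gives it.

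Incidentally, the incidence sets to which the paper applies the theorem are typically reducible; they contain $\{0\}\times H(d_1,\ldots,d_n)$ or diagonal pieces. So contrary to your remark, the paper's uses are not covered by a reducible form of the statement; there too it has to be applied componentwise rather than to $X$ as a whole.
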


With this theorem, we are now in position to prove our first main result. In order to do so, it is sufficient to obtain a non-empty dense Zariski open set for each item and then take $U$ as their intersection. Hence, we state and prove each item as an individual lemma. The proof strategy is similar for all the lemmas, and thus we show all the details for the first lemma while omitting the analogous arguments for the other ones.

\begin{lemma}\label{lemma1}
There exists a dense Zariski open set $U\subseteq H(d_1,\ldots,d_n)$ such that, for every $F\in U$, we have $F^{-1}(0)=\{0\}$.
\end{lemma}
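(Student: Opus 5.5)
The plan is an incidence-variety argument combined with the Theorem of Dimension of the Fibres (Theorem \ref{tdf}). Since every $F\in H(d_1,\ldots,d_n)$ is homogeneous, $F^{-1}(0)$ is a cone: $F^{-1}(0)\neq\{0\}$ exactly when the $f_k$ have a common zero in $\mathbb{P}^{n-1}$. So we must show that the set of mappings whose components share a projective zero is contained in a proper closed subset of $H(d_1,\ldots,d_n)\cong\mathbb{C}^N$.

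The first step is to introduce the incidence set
$$X=\{(F,[x])\in H(d_1,\ldots,d_n)\times\mathbb{P}^{n-1}\mid f_1(x)=\cdots=f_n(x)=0\},$$
which is closed because each condition is bihomogeneous in $x$ and linear in the coefficients. To bound $\dim X$ we use the projection $\pi_2:X\to\mathbb{P}^{n-1}$. For a fixed $[x]$, the condition $f_k(x)=0$ is one nontrivial linear equation in the coefficients of $f_k$. It is nontrivial because some monomial of degree $d_k$, for instance $x_j^{d_k}$ with $x_j\neq0$, does not vanish at $x$. The $n$ conditions involve disjoint sets of coefficients, so they are independent. Hence every fibre $\pi_2^{-1}([x])$ is a linear subspace of codimension exactly $n$, that is, of dimension $N-n$. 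We would check this dimension count on an affine chart $x_j\neq0$ of $\mathbb{P}^{n-1}$; if one prefers to stay affine, one can work with $\mathbb{C}^n\setminus\{0\}$ instead. Applying Theorem \ref{tdf} to $\pi_2$, or directly the fact that $X$ is a vector bundle over $\mathbb{P}^{n-1}$, gives $\dim X=(n-1)+(N-n)=N-1$.

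Next, let $\pi_1:X\to H(d_1,\ldots,d_n)$ be the first projection. Because $\mathbb{P}^{n-1}$ is complete, $\pi_1$ is a closed map, so $\pi_1(X)$ is Zariski closed. Its dimension is at most $\dim X=N-1<N$, so it is a proper closed subset. The set $U=H(d_1,\ldots,d_n)\setminus\pi_1(X)$ is therefore a nonempty Zariski open subset, hence dense because the affine space is irreducible. By construction, every $F\in U$ satisfies $F^{-1}(0)=\{0\}$.

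The main obstacle is the closedness of $\pi_1(X)$, since the paper only works with affine sets. If we avoid projective space and use $\mathbb{C}^n\setminus\{0\}$, Chevalley's theorem only makes the image constructible, so we would instead take $U$ to be the complement of its Zariski closure, which still has dimension at most $N-1$. As an alternative, the resultant $\mathrm{Res}(f_1,\ldots,f_n)$ is a polynomial in the coefficients that vanishes exactly when the $f_k$ have a common projective zero. We could take $U=\{\mathrm{Res}\neq0\}$, which is nonempty because $F=(x_1^{d_1},\ldots,x_n^{d_n})$ lies in it, since its only zero is the origin.
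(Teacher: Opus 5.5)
Your proof is correct. It is the same incidence-correspondence dimension count as the paper's, but organized differently.

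The paper stays affine, with $X=\{(p,F)\in\mathbb{C}^n\times H(d_1,\ldots,d_n)\mid F(p)=0\}$. It moves a nonzero $p$ to $e_1$ and finds that the fibre over $e_1$ has codimension $n$, which is your computation. This gives $\dim X=\dim H(d_1,\ldots,d_n)$. It then applies Theorem \ref{tdf} to $\pi_2$ to get zero-dimensional generic fibres. It concludes by homogeneity: a nonzero $p\in F^{-1}(0)$ would put the whole line $\mathbb{C}\cdot p$ into $\pi_2^{-1}(F)$.

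By passing to $\mathbb{P}^{n-1}$ you quotient out that scaling in advance, so the dimension drop is built in ($\dim X=N-1$ instead of $N$). Completeness of $\mathbb{P}^{n-1}$ then makes the bad locus $\pi_1(X)$ itself closed. So $U$ is explicitly its complement, with no separate homogeneity step. This also avoids a technical point in the affine version. There $X$ contains the entire component $\{0\}\times H(d_1,\ldots,d_n)$, so it is reducible and Theorem \ref{tdf} has to be applied componentwise. The line-in-the-fibre argument is what shows the component with $p\neq0$ is not dominant. Your $X$ is instead an irreducible vector bundle over $\mathbb{P}^{n-1}$.

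What the affine version buys is a single template reused in Lemmas \ref{lemma2}--\ref{lemma7}. There, conditions such as $F(p_1)=F(p_2)$ are invariant only under simultaneous rescaling of the points. So they do not descend as simply to products of projective spaces.

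One caution about your affine fallback over $\mathbb{C}^n\setminus\{0\}$. There the incidence set has dimension $N$, so the bound $\dim\overline{\pi_1(X^\circ)}\le N-1$ is not automatic. It needs the paper's homogeneity observation: nonempty fibres contain punctured lines, hence have dimension at least $1$. Alternatively, note that this image coincides with the projective one. The resultant alternative is a different, classical route and is fine as stated.
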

\begin{proof}

Define $$X=\{(p,F)\in\mathbb{C}^n\times H(d_1,\ldots,d_n)\mid F(p)=0\}$$ and consider the projections $\pi_1:X\to\mathbb{C}^n$, $\pi_2:X\to H(d_1,\ldots,d_n)$. By Theorem \ref{tdf}, there exists a non-empty Zariski open subset $U_1\subseteq\mathbb{C}^n$ such that $$\text{dim}(\pi_1^{-1}(x))=\text{dim}(X)-\text{dim}(\mathbb{C}^n)=\text{dim}(X)-n,\text{ }\forall x\in U_1.$$ Consider the non-empty Zariski open subset $U_2=\mathbb{C}^n\setminus\{0\}$. Given $x\in U_2$, we consider a linear isomorphism $T$ such that $x\mapsto e_1$. It follows that $T$ induces an isomorphism $\pi_1^{-1}(x)\cong \pi_1^{-1}(e_1)$, where each $(x,F)\in\pi_1^{-1}(x)$ is identified with $(e_1,F\circ T^{-1})\in\pi_1^{-1}(e_1)$.

Let us describe and compute the dimension of the fiber $\pi_1^{-1}(e_1)$, which we identify as a subset of $H(d_1,\ldots,d_n)$. Note that, from \ref{exp1}, we have $f_k(e_1)=a_{0,\ldots,0;k}$, for $1\leq k\leq n$. Then $$\pi_1^{-1}(e_1)\cong\{F\in H(d_1,\ldots,d_n)\mid a_{0,\ldots,0;1}=\cdots=a_{0,\ldots,0;n}=0\},$$ which has codimension $n$ in $H(d_1,\ldots,d_n)$. Now, take a point $x\in U_1\cap U_2$. Since $\pi_1^{-1}(x)\cong\pi_1^{-1}(e_1)$, it follows that their dimensions are equal, that is, $$\text{dim}(X)-n=\text{dim}(H(d_1,\ldots,d_n))-n\Longrightarrow \text{dim}(X)-\text{dim}(H(d_1,\ldots,d_n))=0.$$ On the other hand, applying Theorem \ref{tdf} to $\pi_2$, we obtain a non-empty Zariski open subset $U\subseteq H(d_1,\ldots,d_n)$ such that, for every $F\in U$, we have $$\text{dim}(\pi_2^{-1}(F))=\text{dim}(X)-\text{dim}(H(d_1,\ldots,d_n))=0.$$ Let us prove that this $U$ satisfies the property given in the lemma. Take $F\in U$. Since $F$ is homogeneous, we have that $F(0)=0$, so $\{0\}\subseteq F^{-1}(0)$. On the other hand, if we suppose that there exists $p\in F^{-1}(0)\setminus\{0\}$, since $\pi_2^{-1}(F)$ is given by a homogeneous equation, we have that $$(\lambda p,F)\in\pi_2^{-1}(F),\text{ }\forall\lambda\in\mathbb{C}.$$ Thus, the fiber $\pi_2^{-1}(F)$ contains a line, which contradicts the dimension of this fiber. Therefore, we conclude that $F^{-1}(0)=\{0\}$, for every $F\in U$.

\end{proof}

\begin{remark}

The set $F^{-1}(0)$ is the set of points that are solutions to a system of $n$ homogeneous equations, represented by $f_k(x_1, \ldots, x_n) = 0$ for $1 \leq k \leq n$, in the complex space $\mathbb{C}^n$. Each equation defines a hypersurface consisting of lines passing through the origin. The above lemma says that, in a generic choices of such equations, the intersection of $n$ such hypersurfaces results in only the origin as a common point. Furthermore, we show the following proposition, which states the topological equivalence of this geometric interpretation.

\end{remark}

\begin{proposition}\label{prop1}
Let $F\in H(d_1,\ldots,d_n)$. Then $F:\mathbb{C}^n\to\mathbb{C}^n$ is proper (with respect to the usual topology) if, and only if, $F^{-1}(0)=\{0\}$.
\end{proposition}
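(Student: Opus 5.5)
We prove the two implications separately; both rest on the homogeneity of the components, with $f_k(\lambda x)=\lambda^{d_k}f_k(x)$ for $\lambda\in\mathbb{C}$ (in particular for real $t>0$).

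\emph{Proper implies $F^{-1}(0)=\{0\}$.} Since $F$ is proper, the fibre $F^{-1}(0)$ is compact. If it contained some $p\neq 0$, then by homogeneity $F(\lambda p)=0$ for every $\lambda\in\mathbb{C}$. So the whole line $\mathbb{C}p$ would lie in $F^{-1}(0)$. A line is unbounded, which contradicts compactness.

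\emph{$F^{-1}(0)=\{0\}$ implies proper.} We aim for a coercivity estimate
$$\|F(x)\|_\ast:=\max_{1\le k\le n}|f_k(x)|^{1/d_k}\ \geq\ c\,\|x\|\qquad\text{for all }x\in\mathbb{C}^n,$$
for some constant $c>0$. The quantity $\|F(x)\|_\ast$ is a quasi-norm adapted to the weights $d_k$.
\begin{enumerate}
\item Let $S=\{\|x\|=1\}$ be the unit sphere; it is compact.
\item The function $x\mapsto \|F(x)\|_\ast$ is continuous on $S$. By hypothesis it does not vanish on $S$, since $0\notin S$. Hence it attains a positive minimum $c$ on $S$.
\item For $x\neq 0$, write $x=t u$ with $t=\|x\|>0$ and $u\in S$. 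Homogeneity gives $|f_k(x)|^{1/d_k}=t\,|f_k(u)|^{1/d_k}$, so $\|F(x)\|_\ast\geq c\,t=c\|x\|$.
\end{enumerate}

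Now let $K\subseteq\mathbb{C}^n$ be compact. Then $K$ is bounded, so $\|y\|_\ast:=\max_k|y_k|^{1/d_k}\le M$ for all $y\in K$. By the estimate, every $x\in F^{-1}(K)$ satisfies $\|x\|\le M/c$, so $F^{-1}(K)$ is bounded. It is also closed, because $F$ is continuous. Hence $F^{-1}(K)$ is compact, and $F$ is proper.

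The only delicate point is that the degrees $d_k$ differ, so $\|F(x)\|$ is not itself homogeneous of a single degree. Taking the $d_k$-th roots in the quasi-norm removes this problem. Alternatively, one can argue directly with sequences: if $\|x_m\|\to\infty$ while $F(x_m)$ stays bounded, normalize to $u_m=x_m/\|x_m\|$ and pass to a limit point $u\in S$. Then $f_k(u_m)=f_k(x_m)/\|x_m\|^{d_k}\to 0$ for every $k$, so $F(u)=0$ with $u\neq 0$, a contradiction. No earlier result of the paper is needed beyond the definitions, and nothing here is a serious obstacle.
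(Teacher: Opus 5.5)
Your proof is correct and follows essentially the same argument as the paper. The forward direction uses the line $\mathbb{C}p$ inside the compact fibre $F^{-1}(0)$. The converse takes a positive lower bound of $F$ on the unit sphere and spreads it outward by homogeneity. The only cosmetic difference is how the unequal degrees are handled: you use the weighted quasi-norm $\max_k|f_k|^{1/d_k}$, whereas the paper restricts to $\|x\|\ge 1$ and uses $\|F(x)\|\ge\alpha\|x\|^{d}$ with $d=\min d_i$.
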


\begin{proof}
Suppose that $F$ is proper. In particular, we have that $F^{-1}(0)$ is compact (in fact, it is a set of finite points, which can be shown by applying the Geometric Form of Noether's Normalization Theorem, given in \cite[p. 42]{3}, to every irreducible component of $F^{-1}(0)$) and, in particular, it is bounded. By homogeneity, we have $\{0\}\subseteq F^{-1}(0)$, and if we suppose that there exists a point $p\in F^{-1}(0)\setminus\{0\}$, we would have that $\lambda p\subseteq F^{-1}(0)$, for every $\lambda\in\mathbb{C}$, which is a contradiction with the boundedness of $F^{-1}(0)$. Therefore, $F^{-1}(0)=\{0\}$.

Conversely, suppose that $F^{-1}(0)=\{0\}$. In order to show that $F$ is proper, it suffices to prove that if $(x_k)_{k\in\mathbb{N}}$ is a sequence of points in $\mathbb{C}^n$ such that $x_k\to\infty$, we have that $F(x_k)\to\infty$. Since $F^{-1}(0)=\{0\}$, we have that $0\notin||F(S^{2n-1})||$, where $S^{2n-1}$ denotes the unit sphere in $\mathbb{C}^n$. Since $\mathbb{R}$ is a regular space, we can separate $0$ from the closed set $||F(S^{2n-1})||$, that is, there exists $\alpha>0$ such that $$||F(x)||\geq\alpha>0,\text{ }\forall x\in S^{2n-1}.$$ Since $x_k\to\infty$, we can suppose that $||x_k||\geq 1$, for all $k$, and it follows that $||F(x_k)||\geq||x_k||^d\alpha$, for all $k$, where $d=\text{min}\{d_1,\ldots,d_n\}$. Hence, $F(x_k)\to\infty$. Therefore, $F$ is proper.
\end{proof}

\begin{remark}\label{rmk4}

The proposition above shows the existence of a non-empty Zariski open subset $U\subset H(d_1,\ldots,d_n)$ such that every $F\in U$ is proper. Now we consider the continuous mapping $$\pi:\Omega(d_1,\ldots,d_n)\to H(d_1,\ldots,d_n)$$ given by the projection of the coefficients of the homogeneous part of highest degree of $F$. We observe that the condition $F(x_k)\to\infty$ is determined by this highest degree homogeneous part of $F$. Thus, we conclude that $\pi^{-1}(U)$ is also a non-empty Zariski open subset of $\Omega(d_1,\ldots,d_n)$, and every $F\in\pi^{-1}(U)$ is proper. In other words, a generic polynomial mapping $F:\mathbb{C}^n\to\mathbb{C}^n$ is proper.

\end{remark}

\begin{lemma}\label{lemma2}
	
Assume $\text{gcd}(d_i,d_j)\leq2$ and $\text{gcd}(d_i,d_j,d_k)=1$, for $1\leq i<j<k\leq n$. If $d_i$, for $1\leq i\leq n$, are pairwise coprime, then there exists a dense Zariski open set $U\subseteq H(d_1,\ldots,d_n)$ such that, for every $F\in U$, $F$ restricted to any ray contained in $\text{C}(F)$ is injective.

If a pair is not coprime, that is, $d_{i_1}$ and $d_{i_2}$ are even, and $d_{i_3},\ldots,d_{i_n}$ are odd, then $F$ restricted to any ray contained in $\text{C}(F)\setminus\text{Z}(f_{i_j})$ is injective, for $3\leq j\leq n$, and $F$ restricted to any of the finite number of rays contained in $\text{C}(F)\cap\text{Z}(f_{i_3},\ldots,f_{i_n})$ is two-to-one.

\end{lemma}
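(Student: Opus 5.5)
The plan is to reduce injectivity on a ray to a number-theoretic condition on the indices $k$ with $f_k(p)\neq 0$. I will then use incidence varieties, exactly as in Lemma \ref{lemma1}, to show that the bad configurations generically do not occur. Throughout, a ``ray'' is a line $\{\lambda p\mid\lambda\in\mathbb{C}\}$, $p\neq 0$, which by the Remark lies in $\text{C}(F)$ as soon as $p$ does.

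\textbf{Reduction.} By homogeneity,
$$F(\lambda p)=(\lambda^{d_1}f_1(p),\ldots,\lambda^{d_n}f_n(p)).$$
Work in the open set of Lemma \ref{lemma1}, so that $F^{-1}(0)=\{0\}$. Then $\lambda=0$ is the only parameter mapped to $0$, and the set $I(p)=\{k\mid f_k(p)\neq0\}$ is nonempty. For $\lambda,\mu\neq0$, the equality $F(\lambda p)=F(\mu p)$ holds if and only if $(\lambda/\mu)^{d_k}=1$ for all $k\in I(p)$, that is, if and only if $(\lambda/\mu)^{g}=1$ with $g=\gcd\{d_k\mid k\in I(p)\}$. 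So $F$ is injective on the ray exactly when $g=1$, and it is $g$-to-one on $\mathbb{C}^*p$ otherwise. Under the hypotheses $\gcd(d_i,d_j)\le2$ and $\gcd(d_i,d_j,d_k)=1$, the case $g>1$ forces $I(p)$ to be either a singleton $\{k\}$, or, in the non-coprime case, exactly the even pair $\{i_1,i_2\}$, in which case $g=2$.

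\textbf{Excluding the bad configurations.} The statement then follows once I show that for generic $F$ three things hold.
\begin{enumerate}
\item There is no $p\neq0$ with $J(F)(p)=0$ and $f_j(p)=0$ for all $j\neq k$, for each fixed $k$. These are $n$ homogeneous equations.
\item In the even case, the system $J(F)=f_{i_3}=\cdots=f_{i_n}=0$ ($n-1$ equations) defines finitely many lines.
\item On these lines neither $f_{i_1}$ nor $f_{i_2}$ vanishes. This is case 1 with $k=i_2$ or $k=i_1$.
\end{enumerate}
Items 1 and 3 together give injectivity off $\text{Z}(f_{i_3},\ldots,f_{i_n})$, and the two-to-one property on the finitely many remaining rays. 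For each item I form
$$X=\{(p,F)\mid \text{the relevant equations vanish at } p\}$$
and project to $\mathbb{C}^n$ and to $H(d_1,\ldots,d_n)$. As in Lemma \ref{lemma1}, the linear change $T$ sending $x\mapsto e_1$ identifies all fibres of $\pi_1$ over $\mathbb{C}^n\setminus\{0\}$. It then suffices to show that the fibre over $e_1$ has codimension $m$ in $H$, where $m$ is the number of equations. Theorem \ref{tdf} applied to $\pi_2$ then gives generic fibre dimension $n-m$. This is $0$ when $m=n$, which excludes any line since fibres are cones. It is $1$ when $m=n-1$, which gives finitely many lines.

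\textbf{Main obstacle.} The key point is the codimension count at $e_1$, because $J$ is not linear in the coefficients. Let $b_{kj}$ denote the coefficient of $x_1^{d_k-1}x_j$ in $f_k$. The conditions $f_j(e_1)=0$ read $a_{0,\ldots,0;j}=0$, which are independent linear conditions. The Jacobian matrix at $e_1$ has entries
$$\partial f_k/\partial x_j(e_1)=b_{kj}\ (j\ge2),\qquad \partial f_k/\partial x_1(e_1)=d_k a_{0,\ldots,0;k}.$$
The coordinates $b_{kj}$ are disjoint from the $a_{0,\ldots,0;j}$. After imposing the linear conditions, $\det$ is a nonzero polynomial in the remaining coordinates. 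Indeed, the $b_{kj}$ with $j\ge2$ form an $n\times(n-1)$ block of free variables, and the first column still contains a free entry $d_k a_{0,\ldots,0;k}$ for the unconstrained index, or the whole column vanishes if all $a$'s are killed. In the latter case $J(e_1)=0$ automatically. That case has to be handled by noting we then have already $n$ linear conditions, and it only arises when all $f_k$ vanish, which is excluded by Lemma \ref{lemma1}. Hence $J(e_1)=0$ cuts down the dimension by exactly one more, the total codimension is $m$, and intersecting the finitely many resulting open sets with that of Lemma \ref{lemma1} yields $U$.
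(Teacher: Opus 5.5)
Your proposal is correct and follows the paper's proof essentially step for step. It uses the same reduction on a ray to $(\lambda/\mu)^{d_k}=1$ for the indices with $f_k(p)\neq0$, and the same two generic conditions: $\text{C}(F)\cap\text{Z}(f_{j_1},\ldots,f_{j_{n-1}})=\{0\}$ for every $(n-1)$-subset, and $\dim(\text{C}(F)\cap\text{Z}(f_{i_3},\ldots,f_{i_n}))=1$. These are established by the same incidence-variety count at $e_1$, with $J(F)(e_1)$ expanded along its first column. Your aside about the whole first column vanishing is superfluous: in both systems at most $n-1$ of the $a_{0,\ldots,0;k}$ are set to zero, so a free entry always remains.
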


\begin{proof}

First, let us study the injectivity of the mapping $F\big|_{\mathbb{C}\cdot p}$, where $$\mathbb{C}\cdot p=\{\lambda p\mid \lambda\in\mathbb{C}\}\subseteq\text{C}(F)$$ is a complex line passing through the origin and $p$. Given $x,y\in\mathbb{C}\cdot p$, with $x=\lambda p$ and $y=\mu p$, we have that $$\begin{array}{rcl}F(x)=F(y)&\Rightarrow& f_i(\lambda p)=f_i(\mu p),\text{ }1\leq i\leq n\\&\Rightarrow& \lambda^{d_i}f_i(p)=\mu^{d_i}f_i(p),\text{ }1\leq i\leq n.\end{array}$$ If we suppose that $f_i(p)\neq 0$ and $f_j(p)\neq 0$, we have from the above equality that $$\left(\frac{\lambda}{\mu}\right)^{d_i}=\left(\frac{\lambda}{\mu}\right)^{d_j}=1.$$ From the Bézout identity we conclude that $$\left(\frac{\lambda}{\mu}\right)^{\text{gcd}(d_i,d_j)}=1.$$ Therefore, if $\text{gcd}(d_i,d_j)=1$, we get $\lambda=\mu$ and $x=y$, proving that $F\big|_{\mathbb{C}\cdot p}$ is injective, and if $\text{gcd}(d_i,d_j)=2$, we get $\lambda=\pm \mu$ and $x=\pm y$, proving that $F\big|_{\mathbb{C}\cdot p}$ can be two-to-one, and this case will be investigated further in this proof. Therefore, it suffices to prove that, generically, given $p\in\text{C}(F)\setminus\{0\}$, there are $i,j\in\{1,\ldots,n\}$ such that $f_i(p)\neq 0$ and $f_j(p)\neq 0$. For this, it suffices to prove that, for generic $F$, we have $$\text{C}(F)\cap\text{Z}(f_{j_1},\ldots,f_{j_{n-1}})=\{0\},$$ for every subset $\{j_1,\ldots,j_{n-1}\}\subseteq\{1,\ldots,n\}$. Indeed, if we prove this claim, since $p\neq 0$, then, by the above argument, we have $p\notin\text{Z}(f_1,\ldots,f_{n-1})$, therefore, there exists $i$ such that $f_i(p)\neq0$. Now, we also have that $p\notin\text{Z}(f_1,\ldots,f_{i-1},f_{i+1},\ldots,f_n)$, then $f_j(p)\neq 0$ for some $j\neq i$, as we wanted. It suffices to prove for one such subset, as the others are analogous, and then take the intersection of the finitely many dense Zariski open sets in $H(d_1,\ldots,d_n)$ that we obtain. For simplicity, let us prove that $$\text{C}(F)\cap\text{Z}(f_1,\ldots,f_{n-1})=\{0\}$$ is a generic condition. Define $$X=\{(p,F)\in\mathbb{C}^n\times H(d_1,\ldots,d_n)\mid J(F)(p)=f_1(p)=\cdots=f_{n-1}(p)=0\}.$$ Once again, it suffices to obtain the dimension of the fiber $\pi_1^{-1}(e_1)$. We have that the equation $f_k(e_1)=0$ is $a_{0,\ldots,0;k}=0$, for $1\leq k\leq n-1$. Now, the equation $J(F)(e_1)=0$ is $$\left|\begin{array}{ccccc} d_1a_{0,0\ldots,0;1}&a_{1,0,\ldots,0;1}&a_{0,1,\ldots,0;1}&\cdots&a_{0,0,\ldots,1;1}\\d_2a_{0,0\ldots,0;2}&a_{1,0,\ldots,0;2}&a_{0,1,\ldots,0;2}&\cdots&a_{0,0,\ldots,1;2}\\\vdots&\vdots&\vdots&\ddots&\vdots\\d_{n-1}a_{0,0\ldots,0;n-1}&a_{1,0,\ldots,0;n-1}&a_{0,1,\ldots,0;n-1}&\cdots&a_{0,0,\ldots,1;n-1}\\d_na_{0,0\ldots,0;n}&a_{1,0,\ldots,0;n}&a_{0,1,\ldots,0;n}&\cdots&a_{0,0,\ldots,1;n}\end{array}\right|=0,$$ and using the $n-1$ equations obtained above, it simplifies to $$\begin{array}{rcl}0&=&\left|\begin{array}{ccccc} 0&a_{1,0,\ldots,0;1}&a_{0,1,\ldots,0;1}&\cdots&a_{0,0,\ldots,1;1}\\0&a_{1,0,\ldots,0;2}&a_{0,1,\ldots,0;2}&\cdots&a_{0,0,\ldots,1;2}\\\vdots&\vdots&\vdots&\ddots&\vdots\\0&a_{1,0,\ldots,0;n-1}&a_{0,1,\ldots,0;n-1}&\cdots&a_{0,0,\ldots,1;n-1}\\d_na_{0,0\ldots,0;n}&a_{1,0,\ldots,0;n}&a_{0,1,\ldots,0;n}&\cdots&a_{0,0,\ldots,1;n}\end{array}\right|\\&=&(-1)^{n+1}d_na_{0,0,\ldots,0;n}\left|\begin{array}{cccc}a_{1,0,\ldots,0;1}&a_{0,1,\ldots,0;1}&\cdots&a_{0,0,\ldots,1;1}\\a_{1,0,\ldots,0;2}&a_{0,1,\ldots,0;2}&\cdots&a_{0,0,\ldots,1;2}\\\vdots&\vdots&\ddots&\vdots\\a_{1,0,\ldots,0;n-1}&a_{0,1,\ldots,0;n-1}&\cdots&a_{0,0,\ldots,1;n-1}\end{array}\right|,\end{array}$$ which is a non-trivial equation. Therefore $$\text{dim}(\pi^{-1}(e_1))=\text{dim}(H(d_1,\ldots,d_n))-n.$$ On the other hand, we have from Theorem \ref{tdf} that $$\text{dim}(\pi^{-1}(x))=\text{dim}(X)-n,$$ for generic $x\in\mathbb{C}^n$. Since these fibers are isomorphic, we conclude that $$\text{dim}(X)-\text{dim}(H(d_1,\ldots,d_n))=0.$$ Now, by Theorem \ref{tdf}, we have that $$\text{dim}(\pi_2^{-1}(F))=\text{dim}(X)-\text{dim}(H(d_1,\ldots,d_n))=0,$$ for generic $F\in H(d_1,\ldots,d_n)$. Let us verify that those $F$ satisfy the wanted property. First, by homogeneity, we have the inclusion $$\{0\}\subseteq\text{C}(F)\cap\text{Z}(f_1,\ldots,f_{n-1}),$$ and if we suppose that there exists $p\in\text{C}(F)\cap\text{Z}(f_1,\ldots,f_{n-1})\setminus\{0\}$, then $(p,F)\in\pi_2^{-1}(F)$, and by the homogeneity of the equations that defines this fiber, we would have that this fiber contains a line, which is a contradiction with its dimension. Therefore, $$\text{C}(F)\cap\text{Z}(f_1,\ldots,f_{n-1})=\{0\},$$ as we wanted.

Now, if a pair of degrees is not coprime, for example, $\text{gcd}(d_1,d_2)=2$, if we take a ray $\mathbb{C}\cdot p\subseteq\text{C}(F)\setminus\text{Z}(f_k)$, for $3\leq k\leq n$, we have that the other degrees $d_i$ are odd. As we have shown at the start of this proof, we have that if $x,y\in\mathbb{C}\cdot p$ are two points such that $F(x)=F(y)$, then $y=\pm x$, but if we had $y=-x$, then $$f_k(x)=f_k(y)=f_k(-x)=(-1)^{d_k}f_k(x)=-f_k(x),$$ which implies that $f_k(x)=0$, which is a contradiction, since $x\notin\text{Z}(f_k)$. Therefore, $F$ is injective for every ray $\mathbb{C}\cdot p\subseteq\text{C}(F)\setminus\text{Z}(f_i)$, for $3\leq i\leq n$.

Now, for any ray $\mathbb{C}\cdot p\subseteq\text{C}(F)\cap\text{Z}(f_3,\ldots,f_n)$, we have that $F$ is two-to-one, since $y=\pm x$. It remains to show that $\text{C}(F)\cap\text{Z}(f_3,\ldots,f_n)$ is a finite number of lines. To prove this, it suffices to show that, for a generic $F$, we have $$\text{dim}(\text{C}(F)\cap\text{Z}(f_3,\ldots,f_n))=1.$$ Define $$X=\{(p,F)\in\mathbb{C}^n\times H(d_1,\ldots,d_n)\mid J(F)(p)=f_3(p)=\cdots=f_n(p)=0\}.$$ As above, we have that $\pi_1^{-1}(e_1)$ is given by the equations $$a_{0,0,\ldots,0;3}=\cdots=a_{0,0,\ldots,0;n}=0$$ and $$\left|\begin{array}{ccccc} d_1a_{0,0,\ldots,0;1}&a_{1,0,\ldots,0;1}&a_{0,1,\ldots,0;1}&\cdots&a_{0,0,\ldots,1;1}\\d_2a_{0,0,\ldots,0;2}&a_{1,0,\ldots,0;2}&a_{0,1,\ldots,0;2}&\cdots&a_{0,0,\ldots,1;2}\\0&a_{1,0,\ldots,0;3}&a_{0,1,\ldots,0;3}&\cdots&a_{0,0,\ldots,1;3}\\\vdots&\vdots&\vdots&\ddots&\vdots\\0&a_{1,0,\ldots,0;n-1}&a_{0,1,\ldots,0;n-1}&\cdots&a_{0,0,\ldots,1;n-1}\\0&a_{1,0,\ldots,0;n}&a_{0,1,\ldots,0;n}&\cdots&a_{0,0,\ldots,1;n}\end{array}\right|=0,$$ which are $n-1$ independent equations, then $$\text{dim}(\pi_1^{-1}(e_1))=\text{dim}(H(d_1,\ldots,d_n))-(n-1).$$ By Theorem \ref{tdf}, we have that a generic fiber has dimension $\text{dim}(X)-n$, so we have that $$\text{dim}(H(d_1,\ldots,d_n))-n+1=\text{dim}(X)-n\Rightarrow \text{dim}(X)-\text{dim}(H(d_1,\ldots,d_n))=1.$$ Again, from Theorem \ref{tdf}, we have $$\text{dim}(\pi_2^{-1}(F))=\text{dim}(X)-\text{dim}(H(d_1,\ldots,d_n))=1,$$ for a generic $F\in H(d_1,\ldots,d_n)$, now we observe that this fiber is isomorphic to $\text{C}(F)\cap\text{Z}(f_3,\ldots,f_n)$, and the lemma follows.

\end{proof}

\begin{remark}\label{rmk1}
The hypothesis of the previous lemma cannot be omitted. For instance, if we suppose that $d=\text{gcd}(d_1,d_2)>2$, then given a ray $\mathbb{C}\cdot p\subseteq\text{C}(F)\cap\text{Z}(f_3,\ldots,f_n)$, and $\xi\in\mathbb{C}$ a $d$-th primitive root of the unity, we obtain $$F(\xi^ip)=((\xi^i)^{d_1}f_1(p),(\xi^i)^{d_2}f_2(p),\ldots,(\xi^i)^{d_n}f_n(p))=(f_1(p),f_2(p),\ldots,f_n(p))=F(p),$$ for $0\leq i\leq d-1$. This is due to the fact that both $d_1$ and $d_2$ are multiples of $d$, and $f_3,\ldots,f_n$ vanish at $p$. Consequently, $F$ is a $d$-to-one mapping when restricted to any ray contained in $\text{C}(F)\cap\text{Z}(f_3,\ldots,f_n)$.

Moreover, in the case where $d=\text{gcd}(d_1,\ldots,d_n)>1$, considering any ray $\mathbb{C}\cdot p\subseteq\text{C}(F)$ and $\xi$ as a $d$-th primitive root of unity, we obtain \begin{align*}
F(\xi^ip)&=(\xi^{id_1}f_1(p),\ldots,\xi^{id_n}f_n(p))\\
&=(f_1(p),\ldots,f_n(p))\\
&=F(p),
\end{align*} for $0\leq i\leq d-1$, which shows that $F$ is not injective in any ray contained in $\text{C}(F)$.
	
\end{remark}

\begin{lemma}\label{lemma3}
	
There exists a dense Zariski open set $U\subseteq H(d_1,\ldots,d_n)$ such that, for every $F\in U$, we have that $F\big|_{\text{C}(F)}$ is injective outside a set of dimension less than or equal to $n-2$ in $\mathbb{C}^n$.

\end{lemma}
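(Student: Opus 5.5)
We follow the incidence-variety strategy of Lemmas \ref{lemma1} and \ref{lemma2}, now with pairs of points. Define
$$X=\{(p,q,F)\in\mathbb{C}^n\times\mathbb{C}^n\times H(d_1,\ldots,d_n)\mid p\neq q,\ J(F)(p)=J(F)(q)=0,\ F(p)=F(q)\},$$
the locally closed double-point set of $F|_{\text{C}(F)}$. Let $\pi_1:X\to\mathbb{C}^n\times\mathbb{C}^n$ and $\pi_2:X\to H(d_1,\ldots,d_n)$ be the projections. The goal is to show that for generic $F$ the fiber $\pi_2^{-1}(F)$ has dimension at most $n-2$. Its projection to the first factor, namely the set of points of $\text{C}(F)$ where injectivity fails, then has dimension at most $n-2$, which is exactly the claim.

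\textbf{Step 1 (fibres of $\pi_1$).} Take a pair $(p,q)$ with $p,q$ linearly independent. A linear isomorphism $T$ sends $(p,q)\mapsto(e_1,e_2)$, and, as in Lemma \ref{lemma1}, composition with $T^{-1}$ identifies $\pi_1^{-1}(p,q)$ with $\pi_1^{-1}(e_1,e_2)$. The conditions $F(e_1)=F(e_2)$ read $a_{0,\ldots,0;k}=a_{d_k,0,\ldots,0;k}$ for $1\leq k\leq n$, which are $n$ independent linear conditions. The conditions $J(F)(e_1)=0$ and $J(F)(e_2)=0$ are determinants. The first involves the coefficients of monomials $x_1^{d_k}$ and $x_1^{d_k-1}x_j$. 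The second involves $x_2^{d_k}$ and $x_2^{d_k-1}x_j$. Each determinant contains an entry, say $a_{1,0,\ldots,0;k}$ (the coefficient of $x_1^{d_k-1}x_2$) in the first, that does not occur in the linear equations. The analogous entry in the second is the coefficient of $x_1x_2^{d_k-1}$. These monomials are distinct because $d_k\geq 2$, with one exception: when $d_k=2$ the monomial $x_1x_2$ is shared, and we then choose a different column entry, e.g.\ the one for $x_2^{d_k-1}x_3$. This shows the two determinantal equations are independent of each other and of the linear ones. Hence the fiber has codimension $n+2$, and by Theorem \ref{tdf} the part $X^{\circ}$ of $X$ lying over independent pairs satisfies $\dim X^{\circ}=2n+\dim H-(n+2)=\dim H+n-2$.

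\textbf{Step 2 (dependent pairs).} If $q=\lambda p$ with $\lambda\neq1$, the analysis of Lemma \ref{lemma2} applies. Generically $p$ has two nonvanishing coordinates $f_i,f_j$, which forces $\lambda^{\gcd(d_i,d_j)}=1$. These pairs lie inside rays of $\text{C}(F)$: they are either excluded, or (in the even case) confined to the finitely many rays of $\text{C}(F)\cap\text{Z}(f_{i_3},\ldots,f_{i_n})$. That set has dimension $1\leq n-2$ once $n\geq3$; for $n=2$ it is a finite set of lines and must be absorbed into the allowed exceptional set. Alternatively, a direct count in the spirit of Step 1 bounds the dimension of this component by $\dim H+1$ or less. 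Either way the component contributes a set of dimension at most $n-2$ in $\mathbb{C}^n$ for generic $F$.

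\textbf{Step 3.} Apply Theorem \ref{tdf} to $\pi_2$ restricted to each irreducible component of the closure of $X^{\circ}$. If a component does not dominate $H$, its image lies in a proper closed set, which we remove. If it dominates, generic fibres have dimension $\dim X^{\circ}-\dim H\leq n-2$. Intersecting the finitely many resulting open sets, together with the one from Lemma \ref{lemma2}, gives $U$. For $F\in U$, the non-injectivity locus of $F|_{\text{C}(F)}$ is the image of the fiber under a projection, so its dimension is at most $n-2$.

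The main obstacle is Step 1: verifying rigorously that the two Jacobian equations at $e_1$ and $e_2$ are independent modulo the linear conditions $F(e_1)=F(e_2)$, especially for small degrees such as $d_k=2$, where monomials overlap. I would handle this by exhibiting an explicit specialisation, choosing most coefficients to be zero, in which the first determinant is nonzero. Then, on the hypersurface where it vanishes, I would show the second determinant is not identically zero by varying a coefficient that appears only in it.
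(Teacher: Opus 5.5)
Your Steps 1 and 3 are essentially the paper's proof. The paper also computes the fibre over $(e_1,e_2)$, gets codimension $n$ from $F(e_1)=F(e_2)$ and two more from the Jacobian conditions (expanding both determinants along the common column $d_ka_{0,\ldots,0;k}$), then applies Theorem~\ref{tdf} to $\pi_2$ and projects. The paper never treats dependent pairs. Its $X$ even contains the diagonal $\{(p,p,F): J(F)(p)=0\}$, of dimension $\dim H+n-1$, so its equality $\dim X-\dim H=n-2$ only holds for the closure of the part over independent pairs. You are right that something must be said there, but your Step 2 is a genuine gap, and it cannot be closed from the stated hypotheses.

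You import from Lemma~\ref{lemma2} the assumptions $\gcd(d_i,d_j)\leq 2$, $\gcd(d_i,d_j,d_k)=1$, which Lemma~\ref{lemma3} does not make. For $n\geq 3$ these force $\gcd(d_1,\ldots,d_n)=1$ and your argument then goes through. Without some arithmetic assumption, however, the statement is false. Suppose $d=\gcd(d_1,\ldots,d_n)>1$ and $\xi$ is a primitive $d$-th root of unity. Then $F(\xi p)=F(p)$ for all $p$ (Remark~\ref{rmk1}), so your $X$ contains $\{(p,\xi p,F): p\neq 0,\ J(F)(p)=0\}$, of dimension $\dim H+n-1$; your alternative bound $\dim H+1$ therefore fails for $n\geq 3$. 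Moreover, for \emph{every} $F$ and every $S$ with $\dim S\leq n-2$, $F$ is not injective on $\text{C}(F)\setminus S$: since $\dim\text{C}(F)\geq n-1$, you can pick $p\neq 0$ in $\text{C}(F)\setminus(S\cup\xi^{-1}S)$ and compare $p$ with $\xi p$. The simplest case is $n=2$, $d_1=d_2=2$, where every critical line is folded by $p\mapsto -p$. This is exactly your $n=2$ case: Lemma~\ref{lemma2}'s hypotheses allow $\gcd(d_1,d_2)=2$, the ``finitely many rays'' are then all of $\text{C}(F)$, and a one-dimensional set cannot be absorbed into an exceptional set of dimension $n-2=0$.

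What is missing is the hypothesis $\gcd(d_1,\ldots,d_n)=1$. It is necessary, and used directly it makes Step 2 easy without Lemma~\ref{lemma2}. Pairs involving the origin are excluded by Lemma~\ref{lemma1}. If $q=\lambda p$ with $\lambda\neq 0,1$, then $\lambda^{d_k}\neq 1$ for some $k$, so $F(\lambda p)=F(p)$ forces $f_k(p)=0$. Thus $p\in\text{C}(F)\cap\text{Z}(f_k)$, which for generic $F$ has dimension at most $n-2$ by the same count as in Lemma~\ref{lemma2}: the fibre over $e_1$ is cut out by $a_{0,\ldots,0;k}=0$ and the still nontrivial equation $J(F)(e_1)=0$. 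With this hypothesis added, your Steps 1--3 give a correct proof, more careful than the paper's.

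Two remarks on Step 1.
\begin{itemize}
\item A variable occurring in one determinant but not the other does not by itself give independence (consider $ab$ and $ac$). So your specialization plan is really needed. It works because $J(F)(e_1)$ restricted to $\{F(e_1)=F(e_2)\}$ is a generic determinant, hence irreducible.
\item Your $d_k=2$ fallback via $x_2^{d_k-1}x_3$ needs $n\geq 3$. For $n=2$, $d_1=d_2=2$ one gets $J(F)(e_2)=-J(F)(e_1)$ on that subspace, but this case has $\gcd=2$ and is excluded anyway.
\end{itemize}
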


\begin{proof}
	
Define $$X=\{(p_1,p_2,F)\in(\mathbb{C}^n)^2\times H(d_1,\ldots,d_n)\mid F(p_1)=F(p_2),\text{ }J(F)(p_1)=J(F)(p_2)=0\}.$$ Observe that, generically, two vectors in $\mathbb{C}^n$ are linearly independent, since the condition that a set $\{x,y\}\subseteq\mathbb{C}^n$ is linearly dependent defines a proper Zariski closed subset in $\mathbb{C}^n\times\mathbb{C}^n$. Thus, the complement of this set is a dense open Zariski subset. If we take a pair $(p_1,p_2)$ in this set, we can consider the linear isomorphism obtained by linear extension such that $p_i\mapsto e_i$, for $i=1,2$. Therefore, it suffices to obtain the dimension of the fiber $\pi_1^{-1}(e_1,e_2)$. The equation $F(e_1)=F(e_2)$ is $$a_{0,0,\ldots,0;k}=a_{d_k,0,\ldots,0;k},\text{ for }1\leq k\leq n,$$ which describes a subspace of codimension $n$. Now, the equation $J(F)(e_1)=0$ is $$\left|\begin{array}{ccccc}d_1a_{0,0,\ldots,0;1}&a_{1,0,\ldots,0;1}&a_{0,1,\ldots,0;1}&\cdots&a_{0,0,\ldots,1;1}\\d_2a_{0,0,\ldots,0;2}&a_{1,0,\ldots,0;2}&a_{0,1,\ldots,0;2}&\cdots&a_{0,0,\ldots,1;2}\\\vdots&\vdots&\vdots&\ddots&\vdots\\d_na_{0,0,\ldots,0;n}&a_{1,0,\ldots,0;n}&a_{0,1,\ldots,0;n}&\cdots&a_{0,0,\ldots,1;n}\end{array}\right|=0$$ and the equation $J(F)(e_2)=0$ is $$\left|\begin{array}{cccccc}a_{d_1-1,0,0,\ldots,0;1}&d_1a_{d_1,0,0,\ldots,0;1}&a_{d_1-1,1,0,\ldots,0;1}&a_{d_1-1,0,1,\ldots,0;1}&\cdots&a_{d_1-1,0,0,\ldots,1;1}\\a_{d_2-1,0,0,\ldots,0;2}&d_2a_{d_2,0,0,\ldots,0;2}&a_{d_2-1,1,0,\ldots,0;2}&a_{d_2-1,0,1,\ldots,0;2}&\cdots&a_{d_2-1,0,0,\ldots,1;2}\\\vdots&\vdots&\vdots&\vdots&\ddots&\vdots\\a_{d_n-1,0,0,\ldots,0;n}&d_na_{d_n,0,0,\ldots,0;n}&a_{d_n-1,1,0,\ldots,0;n}&a_{d_n-1,0,1,\ldots,0;n}&\cdots&a_{d_n-1,0,0,\ldots,1;n}\end{array}\right|=0.$$ Note that these two equations, together with the equation $F(e_1)=F(e_2)$, become $$\left|\begin{array}{ccccc}d_1a_{0,0,\ldots,0;1}&a_{1,0,\ldots,0;1}&a_{0,1,\ldots,0;1}&\cdots&a_{0,0,\ldots,1;1}\\d_2a_{0,0,\ldots,0;2}&a_{1,0,\ldots,0;2}&a_{0,1,\ldots,0;2}&\cdots&a_{0,0,\ldots,1;2}\\\vdots&\vdots&\vdots&\ddots&\vdots\\d_na_{0,0,\ldots,0;n}&a_{1,0,\ldots,0;n}&a_{0,1,\ldots,0;n}&\cdots&a_{0,0,\ldots,1;n}\end{array}\right|=0$$ and $$\left|\begin{array}{cccccc}a_{d_1-1,0,0,\ldots,0;1}&d_1a_{0,0,0,\ldots,0;1}&a_{d_1-1,1,0,\ldots,0;1}&a_{d_1-1,0,1,\ldots,0;1}&\cdots&a_{d_1-1,0,0,\ldots,1;1}\\a_{d_2-1,0,0,\ldots,0;2}&d_2a_{0,0,0,\ldots,0;2}&a_{d_2-1,1,0,\ldots,0;2}&a_{d_2-1,0,1,\ldots,0;2}&\cdots&a_{d_2-1,0,0,\ldots,1;2}\\\vdots&\vdots&\vdots&\vdots&\ddots&\vdots\\a_{d_n-1,0,0,\ldots,0;n}&d_na_{0,0,0,\ldots,0;n}&a_{d_n-1,1,0,\ldots,0;n}&a_{d_n-1,0,1,\ldots,0;n}&\cdots&a_{d_n-1,0,0,\ldots,1;n}\end{array}\right|=0.$$ We can expand the first determinant with respect to the first column and the second determinant with respect the second column to rewrite these two equations as $$\sum_{k=1}^nd_ka_{0,0,\ldots,0;k}m_{k,1}=0=\sum_{k=1}^nd_ka_{0,0,\ldots,0;k}m_{k,2},$$ where $m_{k,i}$ are the respective minors. Now, outside of each $\text{Z}(a_{0,0,\ldots,0;k})$ we have that these two equations are independent, since $m_{k,1}$ and $m_{k,2}$ are given by different sets of variables. So, outside of $\text{Z}(a_{0,0,\ldots,0;1},\ldots,a_{0,0,\ldots,0;n})$, the fiber $\pi^{-1}(e_1,e_2)$ has codimension $n+2$. Now, inside $\text{Z}(a_{0,0,\ldots,0;1},\ldots,a_{0,0,\ldots,0;n})$, we have that $$a_{0,0,\ldots,0;1}=\cdots=a_{0,0,\ldots,0;k}=0,$$ which describes a subspace of codimension $n$, so the fiber has codimension $2n$. Therefore, since $n\geq 2$, we conclude that $\pi^{-1}(e_1,e_2)$ has codimension $n+2$. Now, for a generic fiber, we have $$\text{dim}(\pi_1^{-1}(p_1,p_2))=\text{dim}(X)-2n,$$ and since we can suppose that $\{p_1,p_2\}$ is linearly independent, we obtain $$\text{dim}(X)-\text{dim}(H(d_1,\ldots,d_n))=n-2.$$ Now, by Theorem \ref{tdf}, we obtain $$\text{dim}(\pi_2^{-1}(F))=\text{dim}(X)-\text{dim}(H(d_1,\ldots,d_n))=n-2.$$ For such $F$, we have that $\text{dim}(\pi_2^{-1}(F))=n-2$, so if we project this set on the first and second factors of $\mathbb{C}^n\times\mathbb{C}^n$, we get two sets of dimension less than or equal to $n-2$, and now we take their union, which is a set of dimension less than or equal to $n-2$ in $\mathbb{C}^n$. Outside of this set, if there existed $x,y\in\text{C}(F)$ such that $F(x)=F(y)$, we would have that $(x,y,F)\in\pi_2^{-1}(F)$, which contradicts the choices of $x$ and $y$. Therefore, $F$ is injective outside of this set of dimension less than or equal to $n-2$.
	
\end{proof}

\begin{lemma}\label{lemma4}
There exists a dense Zariski open set $U\subseteq H(d_1,\ldots,d_n)$ such that, for every $F\in U$, the following holds: If $p\in\Delta(F)$, then $|F^{-1}(p)\cap\text{C}(F)|\leq n-1$.
\end{lemma}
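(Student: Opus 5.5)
We would follow the same incidence-variety strategy as in Lemmas \ref{lemma1}--\ref{lemma3}, now with $n$ points. Define
$$X=\{(p_1,\ldots,p_n,F)\in(\mathbb{C}^n)^n\times H(d_1,\ldots,d_n)\mid F(p_1)=\cdots=F(p_n),\ J(F)(p_1)=\cdots=J(F)(p_n)=0\},$$
and restrict to the Zariski open subset $X^\circ$ where $p_1,\ldots,p_n$ are pairwise distinct. We must show that for generic $F$ the fiber $\pi_2^{-1}(F)\cap X^\circ$ is empty. The plan is to prove $\dim X^\circ<\dim H(d_1,\ldots,d_n)$, so that $\pi_2(X^\circ)$ is contained in a proper closed subset; its complement is the desired $U$.

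To bound $\dim X^\circ$, stratify $(\mathbb{C}^n)^n$ by linear configuration. On the open stratum, $\{p_1,\ldots,p_n\}$ is a basis, and a linear change of coordinates sends it to $\{e_1,\ldots,e_n\}$; as before, $\pi_1^{-1}(p_1,\ldots,p_n)\cong\pi_1^{-1}(e_1,\ldots,e_n)$. On that fiber the conditions $F(e_1)=F(e_i)$, for $2\leq i\leq n$, read $a_{e_1;k}=a_{e_i;k}$ for all $k$, where $a_{e_i;k}$ denotes the coefficient of $x_i^{d_k}$ in $f_k$. These are $n(n-1)$ independent linear conditions. The conditions $J(F)(e_i)=0$ contribute $n$ more equations. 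As in Lemma \ref{lemma3}, expanding each Jacobian determinant along the column containing $d_ka_{e_i;k}$ shows that the minors $m_{k,i}$ involve disjoint sets of mixed coefficients (those of $x_i^{d_k-1}x_j$). Hence, off the locus where all $a_{e_1;k}$ vanish, the $n$ equations are independent. That locus itself has codimension at least $n(n-1)+n$, so the fiber has codimension $n^2$ in every case. This gives
$$\dim\big(X^\circ\cap\pi_1^{-1}(\text{open stratum})\big)=n^2+\dim H-n^2=\dim H.$$
This is not yet strict, so we also use homogeneity. If $(p_1,\ldots,p_n,F)\in X$, then $(\lambda p_1,\ldots,\lambda p_n,F)$ lies on the analogous variety for the rescaled value only when $\lambda^{d_k}$ is constant, so homogeneity does not directly give a curve in the fiber. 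Instead we add the target point: consider $F(p_1)=q$. The fiber over generic $F$ then has dimension $0$, giving finitely many $n$-tuples. This is not enough by itself, so we refine.

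The refinement adds one extra point to get strictness. We run the same count with $n$ replaced by $n$ points but impose only $n-1$ of the equalities $F(p_1)=F(p_i)$ together with all $n$ critical conditions. That is the wrong direction, so instead we count for $n$ points directly and observe an extra independent condition. Since $\Delta(F)$ is a cone-like set, equal values $F(p_i)$ are preserved under the weighted action $\lambda\cdot y=(\lambda^{d_1}y_1,\ldots,\lambda^{d_n}y_n)$ on the target and $p\mapsto\lambda p$ on the source. Thus $\pi_2^{-1}(F)$ is invariant under $(p_i)\mapsto(\lambda p_i)$, which preserves both $F(p_i)=F(p_j)$ and $J(F)(p_i)=0$. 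A nonempty fiber containing a tuple with some $p_i\neq0$ therefore contains a curve, forcing $\dim\geq1$. Since the generic fiber dimension equals $\dim X-\dim H\leq0$, this gives a contradiction. This is the same homogeneity trick as in Lemma \ref{lemma1}.

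The main obstacle is the non-generic strata, where $p_1,\ldots,p_n$ are linearly dependent, for instance several $p_i$ on a common ray or spanning a lower-dimensional subspace. There the linear normalization to $e_1,\ldots,e_n$ fails, and the $F(p_i)=F(p_j)$ conditions can degenerate: points on a single ray may give two-to-one values, as in Lemma \ref{lemma2}. For these strata we would argue by induction on the rank $r$ of the configuration. We normalize to $e_1,\ldots,e_r$ plus fixed linear combinations and check that the stratum loses at least as many dimensions in $(\mathbb{C}^n)^n$ as the fiber gains in $H$. Pairs on a common ray are excluded outright using Lemma \ref{lemma2} and its gcd hypothesis, or else handled by the finiteness there. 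Verifying the independence of the evaluation conditions at dependent points is the step we expect to require the most care.
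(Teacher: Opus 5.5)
\paragraph{Verdict.} There is a genuine gap: your proposal, like the paper's proof, only handles linearly independent configurations, and the dependent ones are exactly where the statement fails.

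\paragraph{The dependent strata.} On the stratum where $p_1,\ldots,p_n$ form a basis, your argument is the paper's. You normalize to $e_1,\ldots,e_n$ and count $n(n-1)+n=n^2$ conditions. You then use the scaling $(p_i)\mapsto(\lambda p_i)$, which preserves both $F(p_i)=F(p_j)$ and $J(F)(p_i)=0$, to make the inequality strict. Your third paragraph's claim that scaling gives no curve in the fiber is wrong, as you notice later; delete the false starts. Restricting to pairwise distinct tuples improves on the paper, whose $X$ contains the diagonal $\{(p,\ldots,p,F): p\in\text{C}(F)\}$, of dimension $\dim H+n-1$. But this only proves that for generic $F$ no $n$ \emph{linearly independent} critical points share a value, and the final bound $\dim X-\dim H\le 0$ must be applied to the basis stratum only. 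The paper stops there (``we can assume that $\{p_1,\ldots,p_n\}$ is linearly independent''). You correctly flag the dependent configurations as the real issue, but your plan does not close the gap. Lemma \ref{lemma2} cannot be invoked: its gcd hypotheses are not part of this statement, and when some $\gcd(d_i,d_j)=2$ it produces two-to-one rays rather than excluding them.

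Here is why the dependent strata break the statement. Suppose $d=\gcd(d_1,\ldots,d_n)\geq n$; for instance $n=2$ with $\gcd(d_1,d_2)=2$, which Theorem \ref{teo1} even allows. Take any $F$ with $J(F)\not\equiv0$ and any $v\in\text{C}(F)\setminus\{0\}$, which exists since $J(F)$ is a nonconstant form. Remark \ref{rmk1} gives $d$ distinct critical points $\xi^iv$ with value $F(v)\in\Delta(F)$. Hence $|F^{-1}(F(v))\cap\text{C}(F)|\geq d>n-1$, and no dense open $U$ can exist. You must therefore add hypotheses, e.g.\ those of Theorem \ref{teo1} plus coprimality when $n=2$. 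Even then you need a genuine count for each degenerate configuration type. For $n=3$ this includes a pair $\pm v$ on one of the finitely many two-to-one rays plus a third critical point. It also includes three pairwise non-proportional points spanning a plane, where $F(p_3)=F(p_1)$ is no longer a coordinate condition on the coefficients.

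\paragraph{The open-stratum count with quadratic components.} Separately, your independence claim fails when some $d_k=2$. The coefficient of $x_ix_j$ in $f_k$ equals both $\partial f_k/\partial x_j(e_i)$ and $\partial f_k/\partial x_i(e_j)$, so the variable sets are not disjoint; the paper's Lemma \ref{lemma3} has the same slip. This really breaks the count. For $n=2$, $d_1=d_2=2$, imposing $F(e_1)=F(e_2)$ gives $f_k=\alpha_kx_1^2+\beta_kx_1x_2+\alpha_kx_2^2$. Then $J(F)=2(\alpha_1\beta_2-\alpha_2\beta_1)(x_1^2-x_2^2)$, so $J(F)(e_2)=-J(F)(e_1)$ and the two Jacobian conditions coincide. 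The fiber over $(e_1,e_2)$ then has codimension $3<n^2$, so the basis stratum has dimension $\dim H+1$ and the scaling argument yields nothing. The excess comes from maps with $J(F)\equiv0$. With quadratic components present you need a different independence argument, e.g.\ first discarding the closed set $\{J(F)\equiv0\}$, or using coefficients of higher-degree components that occur in only one of the Jacobian equations.
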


\begin{proof}
Define $$X=\left\{(p_1,\ldots,p_n,F)\in(\mathbb{C}^n)^n\times H_n(d_1,\ldots,d_n)\left| \begin{array}{c}F(p_1)=\cdots=F(p_n),\\ J(F)(p_i)=0,\text{ }1\leq i\leq n\end{array}\right.\right\}.$$ Once again, we can assume that $\{p_1,\ldots,p_n\}$ is linearly independent, so it suffices to determine the dimension of the fiber $\pi_1^{-1}(e_1,\ldots,e_n)$. The equation $$F(e_1)=F(e_2)=F(e_3)=\cdots=F(e_n)$$ yields \begin{equation}\label{exp2}a_{0,0,\ldots,0;k}=a_{d_k,0,\ldots,0;k}=a_{0,d_k,\ldots,0;k}=\cdots=a_{0,0,\ldots,d_k;k},\text{ for}\ 1\leq k\leq n,\end{equation} describing a subspace of codimension $n(n-1)$. 

Next, as we did in the previous lemma, the equations in \ref{exp2} make the $i$-th column of $J(F)(e_i)$ all the same, for $1\leq i\leq n$. If we expand each of those determinants with respect to the $i$-th column, we conclude that the equations $J(F)(e_i)=0$ still determine a subspace of  codimension $n$ inside the subspace of codimension $n(n-1)$ determined by \ref{exp2}. Therefore, $$\text{dim}(\pi_1^{-1}(e_1,\ldots,e_n))=\text{dim}(H(d_1,\ldots,d_n))-n^2.$$ Now, take a generic fiber, and we have $$\text{dim}(\pi_1^{-1}(p_1,\ldots,p_n))=\text{dim}(X)-n^2,$$ hence, $$\text{dim}(X)-\text{dim}(H(d_1,\ldots,d_n))=0.$$ Using Theorem \ref{tdf}, we conclude that $$\text{dim}(\pi_2^{-1}(F))=\text{dim}(X)-\text{dim}(H(d_1,\ldots,d_n))=0,$$ Therefore, $\text{dim}(\pi_2^{-1}(F))=0$ and, by homogeneity, we conclude that $|F^{-1}(p)\cap\text{C}(F)|\leq n-1$.

\end{proof}

\begin{remark}\label{rmk2}

In order to go further with our results and study the types of singular points that appear in a generic homogeneous polynomial mapping $F$, we need to describe such points through algebraic equations.

First, the singular points are characterized by the equation $J(F)=0$. Now, consider the following equations defined inductively: Define $J_{1,i}(F)$ as the determinant of the matrix obtained by replacing the $i$-th row of the Jacobian Matrix of $F$ by $\nabla J(F)$. Next, if $J_{k,i}(F)$ is defined, we define $J_{k+1,i}(F)$ as the determinant of the matrix obtained by replacing the $i$-th row of the Jacobian Matrix of $F$ by $\nabla J_{k,i}(F)$.

The above expressions, though being too large to write them explicitly, allow us to characterize some types of singularities under the $\mathcal{K}$-equivalence. Take $p\in C(F)$. Through a translation in the source, we can suppose that $p=0$. First, if $p$ is a singularity of corank greater than $1$, then all the minors of order $n-1$ of the Jacobian Matrix of $F$ at $p$ vanish. Thus, applying the Laplace expansion to each $J_{k,i}(F)$ with respect to the $i$-th row, it follows that $J_{k,i}(F)(p)=0$, for every $k$ and $i$. Therefore, those expressions do not help us to study singularities of corank greater than $1$.

Now, suppose that $p$ is a singularity of corank $1$. Then, a minor of order $n-1$ of the Jacobian Matrix of $F$ at $p$ does not vanish. Without loss of generality, suppose that $$\left|\begin{array}{ccc}\frac{\partial f_1}{\partial x_1}(p)&\cdots&\frac{\partial f_1}{\partial x_{n-1}}(p)\\\vdots&\ddots&\vdots\\\frac{\partial f_{n-1}}{\partial x_1}(p)&\cdots&\frac{\partial f_{n-1}}{\partial x_{n-1}}(p)\end{array}\right|\neq0.$$ Define $h:\mathbb{C}^n\to\mathbb{C}^n$ by $$h(x)=(f_1(x),\ldots,f_{n-1}(x),x_n).$$ If we expand its Jacobian determinant at $p$ with respect to the $n$-th row, we see that it is not zero. 
It follows from the Inverse Mapping Theorem that $h$ is a diffeomorphism germ at $p$. Observe that $$x=(h\circ h^{-1})(x)=(f_1(h^{-1}(x)),\ldots,f_{n-1}(h^{-1}(x)),f_n(h^{-1}(x))),$$ then $$(F\circ h^{-1})(x)=(f_1(h^{-1}(x)),\ldots,f_{n-1}(h^{-1}(x)),f_n(h^{-1}(x)))=(x_1,\ldots,x_{n-1},g(x)),$$ where $g(x)=f_n(h^{-1}(x))$. We conclude that $F$ is $\mathcal{A}$-equivalent and, in particular, $\mathcal{K}$-equivalent, to the much simpler germ $$G(x)=(F\circ h^{-1})(x)=(x_1,\ldots,x_{n-1},g(x)).$$ Furthermore, by applying the Weierstrass Preparation Theorem to $g$, it follows that, in a neighborhood of the origin, we can write $$g(x)=E(x_n)W(x',x_n),$$ where $x'=(x_1,\ldots,x_{n-1})$, $E$ does not vanish in this neighborhood and $$W(x',x_n)=x_n^{k+1}+x_n^kw_k(x')+\cdots+x_nw_1(x')+w_0(x'),$$ with $w_i(0)=0$, for $0\leq i\leq k$. This is the known Morin singularity $A_k$.

Even more, the equations $J_{k,i}(F)$ help us to determine the exponent $k$. Let us denote $F$ as $G$, since the exponent $k$ is $\mathcal{K}$-invariant. Then, the equations that we defined become much simpler. First, note that, since $p=0$ is a singular point of $F$, we have $k\geq1$. Now, since $p=0$ and by the expression of $F$, we have that $J_{r,i}(F)(p)=0$, for $r<k$ and $1\leq i\leq n-1$, since the last column vanishes at $p$. Now, considering the family $J_{r,n}(F)$, observe that $$J_{r,n}(F)=\left|\begin{array}{ccccc}1&0&\cdots&0&0\\0&1&\cdots&0&0\\\vdots&\vdots&\ddots&\vdots&\vdots\\0&0&\cdots&1&0\\\frac{\partial J_{r-1,n}(F)}{\partial x_1}&\frac{\partial J_{r-1,n}(F)}{\partial x_2}&\cdots&\frac{\partial J_{r-1,n}(F)}{\partial x_{n-1}}&\frac{\partial^{r+1}}{\partial x_n^{r+1}}(g(x))\end{array}\right|=\frac{\partial^{r+1}}{\partial x_n^{r+1}}(g(x)).$$ 
It follows that, whenever $r<k$, we also have $J_{r,n}(F)(p)=0$, and exactly for $r=k$, we get $J_{r,n}(F)(p)\neq0$. We conclude from this analysis that, if $k$ is the first index such that some $J_{k,i}(F)(p)$ is not zero, then $F$ has a singularity of type $A_k$ at $p$.
	
\end{remark}

Now we restrict our study to the $n=4$ case, because the following lemma does not hold in general:

\begin{lemma}\label{lemma5}
There exists a dense Zariski open set $U\subseteq H(d_1,d_2,d_3,d_4)$ such that, for every $F\in U$, all the singularities of $F$ outside the origin are only $A_1$ (fold), $A_2$ (cusp) and $A_3$ (swallowtail). In particular, the complement of the origin in the critical locus $\text{C}(F)$ is smooth.
\end{lemma}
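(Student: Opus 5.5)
We use the same incidence-variety argument as in Lemmas \ref{lemma1}--\ref{lemma4}, combined with the characterization of Morin singularities via the determinants $J_{k,i}(F)$ from Remark \ref{rmk2}. It suffices to rule out, outside the origin, two kinds of bad points: points of corank at least $2$, and corank $1$ points of type $A_k$ with $k\geq 4$. Each exclusion will come from a dimension count showing that the bad locus in $\mathbb{C}^4\times H(d_1,d_2,d_3,d_4)$ has dimension at most $\dim H$. Since every defining condition is homogeneous in $p$, the bad locus is a union of lines through the origin over each $F$. Theorem \ref{tdf} applied to $\pi_2$ then forces the generic fiber to be empty, because a nonempty fiber would contain a line of nonnegative dimension, while the generic fiber has dimension $\dim X-\dim H<1$.

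\textbf{Step 1 (corank $\geq 2$).} Let $X_1=\{(p,F)\mid \text{all } 3\times 3 \text{ minors of } dF(p) \text{ vanish}\}$. Transporting $p$ to $e_1$ by a linear isomorphism as before, the fiber $\pi_1^{-1}(e_1)$ is the condition that the $4\times 4$ matrix $dF(e_1)$ has rank $\leq 2$. Its entries are $d_k a_{0,\ldots,0;k}$ and the linear coefficients $a_{\ldots;k}$, which are independent coordinates of $H$. The rank $\leq 2$ locus of $4\times4$ matrices has codimension $(4-2)^2=4$. Hence $\dim X_1-\dim H=4-4=0$, so generically no nonzero point of corank $\geq 2$ exists. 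Notice that this count works because codimension $4$ equals $n$. For general $n$ the count is also fine, so corank is not the reason the lemma is restricted to $n=4$.

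\textbf{Step 2 ($A_{\geq 4}$ at corank $1$).} Let $X_2=\{(p,F)\mid J(F)(p)=0,\ J_{k,i}(F)(p)=0,\ k=1,2,3,\ i=1,\ldots,4\}$, restricted to the corank $1$ locus. By Remark \ref{rmk2}, a corank $1$ point is $A_k$ with $k\geq4$ exactly when these vanish. In the normal form, only the $i$ corresponding to the kernel direction matters, so these are effectively $4$ conditions: $J=0$, $\partial_{x_n}^2 g=0$, $\partial_{x_n}^3 g=0$, $\partial_{x_n}^4 g=0$.

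At $e_1$, I would compute them as follows. Use the $\mathrm{GL}_4$ action to assume $dF(e_1)$ has the form with kernel direction $e_2$ and $f_4$ the non-submersive component. Then the conditions become, modulo lower terms, the vanishing of the successive Taylor coefficients $a_{\cdot;4}$ of $f_4$ in $x_2$: that is, the coefficients of $x_1^{d_4-j}x_2^j$ for $j=1,\ldots,4$, corrected by polynomial terms in the other coefficients. This requires $d_4\geq 4$ for these monomials to exist. For smaller degrees, homogeneity (Euler relation) must be used to replace them by other monomials, so I will handle that case separately.

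These $4$ conditions are triangular in the distinct new variables, hence independent. Over the open corank $1$ stratum, the fiber therefore has codimension $4$, giving $\dim X_2-\dim H=0$. This is exactly where $n=4$ enters: $A_{k}$ imposes $k$ conditions on a point, and in general this becomes $n$ conditions. So $A_{n}$ is excluded, but the allowed types are $A_1,\ldots,A_{n-1}$, which for $n=4$ are fold, cusp and swallowtail.

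\textbf{Step 3 (smoothness).} At an $A_k$ point with $k\leq 3$, we have $\nabla J(F)(p)\neq0$: by the normal form, $J=\partial_{x_n}g$ is a versal unfolding coordinate. Hence $\mathrm{C}(F)\setminus\{0\}$ is smooth. Intersecting the two open sets gives $U$.

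The main obstacle is Step 2: verifying that the $J_{k,i}$ conditions at $e_1$ are genuinely independent, i.e. that they involve, triangularly, coefficients not already constrained. This is delicate because the $J_{k,i}$ are large and only meaningful after the change of coordinates $h$. I would first reduce by $\mathrm{GL}_4$ to a normalized linear part at $e_1$ and then compute $g$'s jets explicitly.
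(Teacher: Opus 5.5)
\textbf{Verdict: the overall route matches the paper's, but there are genuine gaps in Steps~2 and~3.}

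Your scheme is the paper's: incidence variety, fibre over $e_1$, Theorem~\ref{tdf} applied to both projections, then homogeneity. Step~1 handles the corank-$\geq 2$ stratum more cleanly than the paper does. Up to the factors $d_k$, $F\mapsto dF(e_1)$ is a coordinate projection of $H(d_1,d_2,d_3,d_4)$ onto $M_4(\mathbb{C})$, so the rank-$\leq 2$ locus pulls back to codimension $4$; the paper only reaches this as the last stratum of its minor-by-minor analysis. Your comment that this count ``is also fine for general $n$'' is backwards, however. It gives $\dim X_1-\dim H=n-4$, which for $n\geq 5$ no longer forces the corank-$2$ locus down to the origin. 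That is why the lemma stops at $n=4$, as the remark after the paper's proof says. The $A_k$ count is not the obstruction: it excludes $A_{\geq n}$ in every dimension.

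\textbf{Step~2.} This is the heart of the lemma, and it is only sketched, using a normalization that is not available. Linear changes in the target mix components of different degrees and leave $H(d_1,d_2,d_3,d_4)$, so you cannot choose which component is non-submersive. At a corank-one point write $\mathrm{adj}\,dF(e_1)=v\lambda^{T}$; the usable components are exactly those with $\lambda_k\neq 0$. So you must stratify by non-vanishing minors, as the paper does. On $\{m_{k,1}\neq 0\}$, the Jacobian of $(J,J_{1,k},J_{2,k},J_{3,k})(e_1)$ with respect to $(a_{1,0,0;k},\dots,a_{4,0,0;k})$ is lower triangular with diagonal $(-1)^i i!\,m_{k,1}^{i}$. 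One repeats this with columns $3$ and $4$, and the common zero set of all these minors has codimension $4$. If you instead move the kernel to $e_2$, all $a_{1,0,0;k}$ vanish and you must add back the $3$-dimensional family of kernel directions; your list $j=1,\dots,4$ mixes the two bookkeepings.

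Because the stratum dictates the component, the case $d_k<4$ cannot in general be removed by relabelling (e.g. when all $d_k\leq 3$). Deferring it leaves a real hole. Since $\partial_{x_2}^{j}f_k(e_1)=0$ for $j>d_k$, independence would have to come from mixed lower-order coefficients entering through the higher derivatives of $h^{-1}$; Euler's identity does not supply this. The paper's computation also uses $a_{4,0,0;k}$, so it tacitly needs $d_k\geq 4$ and cannot be borrowed for this case.

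\textbf{Step~3.} This step is false as stated: $\mathcal{K}$-equivalence to $A_k$ does not make the normal form a versal unfolding. For example, $(x_1,x_2,x_3,x_4^3+x_1^2x_4)$ is $\mathcal{K}$-equivalent to $A_2$, yet its critical set $\{3x_4^2+x_1^2=0\}$ is singular along $\{x_1=x_4=0\}$. In fact $J_{1,i}(F)(p)=\lambda_i\,\nabla J(F)(p)\cdot v$, so smoothness of $\text{C}(F)$ is guaranteed only at $A_1$ points.

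You need one more incidence count. By Euler's identity, $\partial_1 J(F)(e_1)$ is a multiple of $J(F)(e_1)$. So $\nabla J(F)(e_1)=0$ means $J(F)(e_1)=0$ together with $\partial_jJ(F)(e_1)=0$ for $j=2,3,4$. Work on the corank-one stratum with $v\notin\mathbb{C}e_1$; otherwise $F(e_1)=0$, which has codimension $4$. There the three conditions read $\lambda^{T}D^2F(e_1)(v,e_j)=0$. Fix $k$ and $l\geq 2$ with $\lambda_kv_l\neq 0$. The conditions are linear in the coefficients of $x_1^{d_k-2}x_jx_l$ in $f_k$, with Jacobian determinant $2\lambda_k^3v_l^3\neq 0$. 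So this locus also has codimension $4$. The paper's proof does not supply this step either; it only asserts the ``in particular''.
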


\begin{proof}
	
Define $$X=\left\{(p,F)\in\mathbb{C}^4\times H(d_1, d_2, d_3, d_4)\left|\begin{array}{c}J(F)(p)=J_{1,i}(F)(p)=J_{2,i}(F)(p)=\\=J_{3,i}(F)(p)=0,\text{ }1\leq i\leq 4\end{array}\right.\right\}.$$ As we discussed in Remark \ref{rmk2}, if $(p,F)\in X$, then $p$ is either a singularity of corank greater than $1$ or of type $A_k$, with $k\geq4$. 

Let us compute the codimension of $\pi_1^{-1}(e_1)$. First, we expand the determinant $J(F)(e_1)$ with respect to the second column and we get that the equation $J(F)(e_1)=0$ can be written as $$0=\left|\begin{array}{cccc}d_1a_{0,0,0;1}&a_{1,0,0;1}&a_{0,1,0;1}&a_{0,0,1;1}\\d_2a_{0,0,0;2}&a_{1,0,0;2}&a_{0,1,0;2}&a_{0,0,1;2}\\d_3a_{0,0,0;3}&a_{1,0,0;3}&a_{0,1,0;3}&a_{0,0,1;3}\\d_4a_{0,0,0;4}&a_{1,0,0;4}&a_{0,1,0;4}&a_{0,0,1;4}\end{array}\right|=\sum_{k=1}^4(-1)^ka_{1,0,0;k}m_{k,1},$$ where $m_{k,1}$ is the minor obtained by removing the second column and the $k$-th row. Now, we shall compute the $4\times4$ minor $$\frac{\partial(J(F)(e_1),J_{1,1}(F)(e_1),J_{2,1}(F)(e_1),J_{3,1}(F)(e_1))}{\partial(a_{1,0,0;1},a_{2,0,0;1},a_{3,0,0;1},a_{4,0,0;1})}.$$ Observe that, from the expression $$f_1=\sum a_{i_2,i_3,i_4;1}x_1^{d_1-i_2-i_3-i_4}x_2^{i_2}x_3^{i_3}x_4^{i_4},$$ the variable $a_{i,0,0;1}$ appears in the above minor if, and only if, we take derivatives of $f_1$ with respect to the variable $x_2$ at least $i$ times. However, the determinant $J(F)$ involves only first-order derivatives, and $J_{k,1}$ involves derivatives of $f_1$ up to order $k+1$. Then, that minor has the form $$\left|\begin{array}{cccc}\frac{\partial J(F)(e_1)}{\partial a_{1,0,0;1}}&0&0&0\\\frac{\partial J_{1,1}(F)(e_1)}{\partial a_{1,0,0;1}}&\frac{\partial J_{1,1}(F)(e_1)}{\partial a_{2,0,0;1}}&0&0\\\frac{\partial J_{2,1}(F)(e_1)}{\partial a_{1,0,0;1}}&\frac{\partial J_{2,1}(F)(e_1)}{\partial a_{2,0,0;1}}&\frac{\partial J_{2,1}(F)(e_1)}{\partial a_{3,0,0;1}}&0\\\frac{\partial J_{3,1}(F)(e_1)}{\partial a_{1,0,0;1}}&\frac{\partial J_{3,1}(F)(e_1)}{\partial a_{2,0,0;1}}&\frac{\partial J_{3,1}(F)(e_1)}{\partial a_{3,0,0;1}}&\frac{\partial J_{3,1}(F)(e_1)}{\partial a_{4,0,0;1}}\end{array}\right|$$ Therefore, it is sufficient to compute the diagonal of that determinant. From the expanded expression of $J(F)(e_1)$, we see that $$\frac{\partial J(F)(e_1)}{\partial a_{1,0,0;1}}=-m_{1,1}.$$ Now, $J_{1,1}(F)$ consists of up to second-order derivatives, and the coefficient $a_{2,0,0;1}$ only appears when we take derivatives of $f_1$ with respect to $x_2$ at least twice. Thus, it only appears where we have the term$\frac{\partial^2f_1}{\partial x_2^2}(e_1)=2a_{2,0,0;1}$, and it is only possible to show up in the first row of every determinant that we write in the following computation. For this reason, we can use the formula for taking the derivative of a multilinear mapping, focusing on the term $\frac{\partial^2f_1}{\partial x_2^2}$, and obtain \begin{align*} \frac{\partial J_{1,1}(F)(e_1)}{\partial a_{2,0,0;1}}&=\frac{\partial}{\partial a_{2,0,0;1}}\left|\begin{array}{cccc}\frac{\partial J(F)}{\partial x_1}(e_1)&\frac{\partial J(F)}{\partial x_2}(e_1)&\frac{\partial J(F)}{\partial x_3}(e_1)&\frac{\partial J(F)}{\partial x_4}(e_1)\\\frac{\partial f_2}{\partial x_1}(e_1)&\frac{\partial f_2}{\partial x_2}(e_1)&\frac{\partial f_2}{\partial x_3}(e_1)&\frac{\partial f_2}{\partial x_4}(e_1)\\\frac{\partial f_3}{\partial x_1}(e_1)&\frac{\partial f_3}{\partial x_2}(e_1)&\frac{\partial f_3}{\partial x_3}(e_1)&\frac{\partial f_3}{\partial x_4}(e_1)\\\frac{\partial f_4}{\partial x_1}(e_1)&\frac{\partial f_4}{\partial x_2}(e_1)&\frac{\partial f_4}{\partial x_3}(e_1)&\frac{\partial f_4}{\partial x_4}(e_1)\end{array}\right|\\&=\left|\begin{array}{cccc}0&\frac{\partial}{\partial a_{2,0,0;1}}\left(\frac{\partial J(F)}{\partial x_2}(e_1)\right)&0&0\\\frac{\partial f_2}{\partial x_1}(e_1)&\frac{\partial f_2}{\partial x_2}(e_1)&\frac{\partial f_2}{\partial x_3}(e_1)&\frac{\partial f_2}{\partial x_4}(e_1)\\\frac{\partial f_3}{\partial x_1}(e_1)&\frac{\partial f_3}{\partial x_2}(e_1)&\frac{\partial f_3}{\partial x_3}(e_1)&\frac{\partial f_3}{\partial x_4}(e_1)\\\frac{\partial f_4}{\partial x_1}(e_1)&\frac{\partial f_4}{\partial x_2}(e_1)&\frac{\partial f_4}{\partial x_3}(e_1)&\frac{\partial f_4}{\partial x_4}(e_1)\end{array}\right|\\&=-m_{1,1}\frac{\partial}{\partial a_{2,0,0;1}}\left(\frac{\partial}{\partial x_2}\left|\begin{array}{cccc}\frac{\partial f_1}{\partial x_1}&\frac{\partial f_1}{\partial x_2}&\frac{\partial f_1}{\partial x_3}&\frac{\partial f_1}{\partial x_4}\\\frac{\partial f_2}{\partial x_1}&\frac{\partial f_2}{\partial x_2}&\frac{\partial f_2}{\partial x_3}&\frac{\partial f_2}{\partial x_4}\\\frac{\partial f_3}{\partial x_1}&\frac{\partial f_3}{\partial x_2}&\frac{\partial f_3}{\partial x_3}&\frac{\partial f_3}{\partial x_4}\\\frac{\partial f_4}{\partial x_1}&\frac{\partial f_4}{\partial x_2}&\frac{\partial f_4}{\partial x_3}&\frac{\partial f_4}{\partial x_4}\end{array}\right|(e_1)\right)\\&=-m_{1,1}\left|\begin{array}{cccc}0&\frac{\partial}{\partial a_{2,0,0;1}}\left(\frac{\partial^2 f_1}{\partial x_2^2}(e_1)\right)&0&0\\\frac{\partial f_2}{\partial x_1}(e_1)&\frac{\partial f_2}{\partial x_2}(e_1)&\frac{\partial f_2}{\partial x_3}(e_1)&\frac{\partial f_2}{\partial x_4}(e_1)\\\frac{\partial f_3}{\partial x_1}(e_1)&\frac{\partial f_3}{\partial x_2}(e_1)&\frac{\partial f_3}{\partial x_3}(e_1)&\frac{\partial f_3}{\partial x_4}(e_1)\\\frac{\partial f_4}{\partial x_1}(e_1)&\frac{\partial f_4}{\partial x_2}(e_1)&\frac{\partial f_4}{\partial x_3}(e_1)&\frac{\partial f_4}{\partial x_4}(e_1)\end{array}\right|	\\&=-m_{1,1}\left|\begin{array}{cccc}0&2&0&0\\\frac{\partial f_2}{\partial x_1}(e_1)&\frac{\partial f_2}{\partial x_2}(e_1)&\frac{\partial f_2}{\partial x_3}(e_1)&\frac{\partial f_2}{\partial x_4}(e_1)\\\frac{\partial f_3}{\partial x_1}(e_1)&\frac{\partial f_3}{\partial x_2}(e_1)&\frac{\partial f_3}{\partial x_3}(e_1)&\frac{\partial f_3}{\partial x_4}(e_1)\\\frac{\partial f_4}{\partial x_1}(e_1)&\frac{\partial f_4}{\partial x_2}(e_1)&\frac{\partial f_4}{\partial x_3}(e_1)&\frac{\partial f_4}{\partial x_4}(e_1)\end{array}\right|\\&=2m_{1,1}^2.\end{align*} With the same argument, we obtain $$\frac{\partial J_{i-1,1}(F)(e_1)}{\partial a_{i,0,0;1}}=(-1)^ii!m_{1,1}^i.$$
Therefore, \begin{align*}\frac{\partial(J(F)(e_1),J_{1,1}(F)(e_1),J_{2,1}(F)(e_1),J_{3,1}(F)(e_1))}{\partial(a_{1,0,0;1},a_{2,0,0;1},a_{3,0,0;1},a_{4,0,0;1})}&=\prod_{i=1}^4(-1)^ii!m_{1,1}^i\\&=288m_{1,1}^{10}\end{align*} We see that outside of the set $\text{Z}(m_{1,1})$, the above $4\times 4$ minor is not zero, thus, those $4$ hypersurfaces intersect transversally, which implies that $$\text{dim}(\pi_1^{-1}(e_1))= \text{dim}(H(d_1,d_2,d_3,d_4))-4$$ outside $\text{Z}(m_{1,1})$. The same conclusion is true outside of $\text{Z}(m_{k,1})$, for $1\leq k\leq 4$, computing $$\frac{\partial(J(F)(e_1),J_{1,k}(e_1),J_{2,k}(e_1),J_{3,k}(F)(e_1))}{\partial(a_{1,0,0;k},a_{2,0,0;k},a_{3,0,0;k},a_{4,0,0;k})}=288m_{k,1}^{10}.$$ Now, inside $\text{Z}(m_{1,1},m_{2,1},m_{3,1},m_{4,1})$, we have the vanishing of all minors of order $3\times3$ of the $4\times3$ matrix $$\left[\begin{array}{ccc}d_1a_{0,0,0;1}&a_{0,1,0;1}&a_{0,0,1;1}\\d_2a_{0,0,0;2}&a_{0,1,0;2}&a_{0,0,1;2}\\d_3a_{0,0,0;3}&a_{0,1,0;3}&a_{0,0,1;3}\\d_4a_{0,0,0;4}&a_{0,1,0;4}&a_{0,0,1;4}\end{array}\right],$$ and this describes a determinantal variety that has codimension $2$. In order to lose two additional dimensions, we make analogous computations, expanding the determinant $J(F)(e_1)$ with respect to the third and fourth columns, obtaining $$\sum_{k=1}^4(-1)^{k+1}a_{0,1,0;k}m_{k,2}=\sum_{k=1}^4(-1)^ka_{0,0,1;k}m_{k,3}=0,$$ 
where $m_{k,2}$ and $m_{k,3}$ are the corresponding $3\times3$ minors. As we did above, outside each $\text{Z}(m_{k,2})$ and $\text{Z}(m_{k,3})$, we have a transversal intersection of $4$ hypersurfaces, which gives a subset of codimension $4$. Finally, we see that the set $$\text{Z}(m_{1,1},m_{2,1},m_{3,1},m_{4,1},m_{1,2},m_{2,2},m_{3,2},m_{4,2},m_{1,3},m_{2,3},m_{3,3},m_{4,3})$$ has codimension $4$.
	
	Therefore, we have that the fiber $\pi_1^{-1}(e_1)$ has codimension $4$. Again, applying Theorem \ref{tdf}, we can take $p\in\mathbb{C}^4$ sufficiently generic and obtain $$\text{dim}(H(d_1,d_2,d_3,d_4))-4=\text{dim}(\pi_1^{-1}(p))=\text{dim}(X)-4,$$ that is, $$\text{dim}(X)-\text{dim}(H(d_1,d_2,d_3,d_4))=0.$$ On the other hand, by Theorem \ref{tdf}, for $F\in H(d_1,d_2,d_3,d_4)$ generic, we have $$\text{dim}(\pi_2^{-1}(F))=\text{dim}(X)-\text{dim}(H(d_1,d_2,d_3,d_4))=0.$$ Since $X$ is defined by homogeneous equations, we conclude that $\pi_2^{-1}(F)=\{(0,F)\}$. Therefore, for such $F$, if $F$ had a singularity that is neither $A_1$, $A_2$ nor $A_3$ at a point $p\in\mathbb{C}^4$ that is not the origin, we would have $(p,F)\in\pi_2^{-1}(F)$, which is a contradiction.

\end{proof}

\begin{remark}
	
The above proof cannot be generalized for higher dimensions, that is, for $F\in H(d_1,d_2,\ldots,d_n)$, with $n\geq 5$. Though we see that all the computations are analogous and we obtain \begin{align*}\frac{\partial(J(F)(e_1),J_{1,k}(F)(e_1),\ldots,J_{n-1,k}(F)(e_1))}{\partial(a_{1,0,\ldots,0;k},a_{2,0,\ldots,0;k},\ldots,a_{n,0,\ldots,0;k})}&=\prod_{i=1}^n(-1)^ii!m_{k,1}^i\\&=\left(\prod_{i=1}^n(-1)^ii!\right)m_{k,1}^\frac{n(n+1)}{2},\end{align*} which lets us conclude that, outside of each $\text{Z}(m_{k,1})$, we have a transversal intersection of $n$ hypersurfaces, where we lose the $n$ dimensions that we want, and the same conclusion is true with the other minors coming from the expansion of $J(F)(e_1)$ with respect to the other columns, we cannot finish the proof, because the subset $$\text{Z}(m_{1,1},\ldots,m_{n,1},m_{1,2},\ldots,m_{n,2},\ldots,m_{1,n-1},\ldots,m_{n,n-1})$$ still has codimension $4$, which prohibits us from losing the $n$ dimensions that is needed.

This is expected to happen, because corank $2$ singularities define a subset of codimension $4$. It implies that for a generic mapping $F:\mathbb{C}^5\to\mathbb{C}^5$, those points will appear along curves, so it cannot happen only at the origin.

For this reason, from now on, we focus our results to the $n=4$ case.
	
\end{remark}

\begin{lemma}\label{lemma6}
	
There exists a dense Zariski open set $U\subseteq H(d_1,d_2,d_3,d_4)$ such that, for every $F\in U$, if $F$ has a swallowtail at $p$, then $F^{-1}(F(p))\cap \text{C}(F)=\{p\}$.

\end{lemma}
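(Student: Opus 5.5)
We follow the same incidence-variety scheme used in Lemmas \ref{lemma3} and \ref{lemma5}. Define
$$X=\left\{(p_1,p_2,F)\in(\mathbb{C}^4)^2\times H(d_1,d_2,d_3,d_4)\left|\begin{array}{c}F(p_1)=F(p_2),\ J(F)(p_2)=0,\\ J(F)(p_1)=J_{1,i}(F)(p_1)=J_{2,i}(F)(p_1)=0,\ 1\leq i\leq 4\end{array}\right.\right\}.$$
By Remark \ref{rmk2}, if $F$ has a swallowtail at $p_1$ (or any worse singularity), then $p_1$ satisfies $J=J_{1,i}=J_{2,i}=0$. So a swallowtail point $p_1$ together with a second critical point $p_2$ in the same fibre gives a point of $X$.

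Next we compute the fibres of the projection $\pi_1:X\to(\mathbb{C}^4)^2$. For a generic pair $(p_1,p_2)$ the two vectors are linearly independent, and a linear change of coordinates reduces the computation to $\pi_1^{-1}(e_1,e_2)$. The count of conditions on the coefficients goes as follows:
\begin{itemize}
\item The equations $F(e_1)=F(e_2)$ read $a_{0,0,0;k}=a_{d_k,0,0;k}$ for $1\leq k\leq 4$, which gives codimension $4$.
\item The equation $J(F)(e_2)=0$ contributes one more condition. As in Lemma \ref{lemma3}, we expand it along its second column and use the minors built from the coefficients $a_{d_k-1,\ast;k}$. These coefficients do not occur in the conditions at $e_1$ once $d_k\geq 2$, except through the identification with $a_{0,0,0;k}$.
\item The conditions at $e_1$, namely $J(F)(e_1)=J_{1,k}(F)(e_1)=J_{2,k}(F)(e_1)=0$, contribute $3$ more conditions. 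This is the triangular minor computation of Lemma \ref{lemma5} in the variables $a_{1,0,0;k},a_{2,0,0;k},a_{3,0,0;k}$, with determinant a nonzero multiple of $m_{k,1}^{6}$. It holds outside $\text{Z}(m_{k,1})$, and the loci where all relevant minors vanish are handled by the column expansions as before.
\end{itemize}
In total $\pi_1^{-1}(e_1,e_2)$ has codimension $8$. Theorem \ref{tdf} then gives $\dim X-8=\dim H-8$, that is, $\dim X-\dim H=0$. Applying Theorem \ref{tdf} to $\pi_2$, the generic fibre $\pi_2^{-1}(F)$ has dimension $0$.

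To conclude, note that all the defining equations are weighted-homogeneous under $(p_1,p_2)\mapsto(\lambda p_1,\lambda p_2)$. So a point of $\pi_2^{-1}(F)$ with $(p_1,p_2)\neq(0,0)$ would yield a whole line in the fibre, contradicting $\dim\pi_2^{-1}(F)=0$. Hence for generic $F$ there is no swallowtail $p\neq0$ with another critical point $q\neq p$ satisfying $F(q)=F(p)$, which is exactly $F^{-1}(F(p))\cap\text{C}(F)=\{p\}$.

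The main obstacle is that the reduction to $(e_1,e_2)$ only covers linearly independent pairs. Pairs with $p_2=\lambda p_1$ and $\lambda\neq1$ must be treated separately, and the natural route is Lemma \ref{lemma2}: on a ray in $\text{C}(F)$, $F$ is injective, or at most two-to-one on finitely many rays. To handle the two-to-one case we add the swallowtail condition to $\text{C}(F)\cap\text{Z}(f_{i_3},f_{i_4})$ and check, by the same fibre count at $e_1$, that this intersection is generically empty outside the origin. That check needs $2+3=5>4$ independent conditions.

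A secondary difficulty is justifying the independence of the conditions at $e_1$ from $J(F)(e_2)=0$ after imposing $a_{0,0,0;k}=a_{d_k,0,0;k}$. I would argue that the relevant variables are disjoint (the indices $(1,0,0),(2,0,0),(3,0,0)$ versus $(d_k-1,\ast)$), with a small-degree check when $d_k\leq 4$ causes overlap. Where they do overlap, I would instead use the variables $a_{0,1,0;k}$ and $a_{0,0,1;k}$ appearing in the other column expansions.
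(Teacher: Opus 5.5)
Your route is the paper's own: the same incidence variety, the same $4+1+3=8$ count over $(e_1,e_2)$, and the same homogeneity argument. The paper never treats dependent pairs, so you are right to raise them. However, two steps fail as written.

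First, your $X$ contains the diagonal piece $\{(p,p,F)\}$, with $p$ any point where $J(F)(p)=J_{1,i}(F)(p)=J_{2,i}(F)(p)=0$. Over $(p,p)$ with $p\neq0$ the fibre has codimension only $3$ in $H(d_1,d_2,d_3,d_4)$, so this piece has dimension $\dim H+1$ and $\dim X-\dim H=0$ is false. The fibre-dimension theorem is really about irreducible $X$, and the generic fibre of $\pi_1$ only sees the part of $X$ lying over independent pairs. Read literally, your last step would even prove that a generic $F$ has no swallowtail at all, since $(\lambda p,\lambda p,F)$ is a line in $\pi_2^{-1}(F)$ for every swallowtail $p$. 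The repair is to work with $X^\circ=\{(p_1,p_2,F)\in X\mid p_1,p_2 \text{ linearly independent}\}$. Since $GL_4(\mathbb{C})$ acts transitively on independent pairs, every fibre of $\pi_1|_{X^\circ}$ is isomorphic to the one over $(e_1,e_2)$, so $\dim X^\circ=\dim H$. Because $X^\circ$ is stable under $(p_1,p_2)\mapsto(\lambda p_1,\lambda p_2)$, its fibre over a generic $F$ is empty. (The paper's proof has the same blind spot.)

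Second, and more seriously, the pairs $p_2=\lambda p_1$ with $\lambda\neq1$ cannot be handled under the hypotheses of the statement. Lemma~\ref{lemma2} needs $\gcd(d_i,d_j)\le2$ and $\gcd(d_i,d_j,d_k)=1$, which neither this lemma nor Theorem~\ref{teo5} assumes, and some coprimality is unavoidable. If $g=\gcd(d_1,\dots,d_4)>1$ and $\xi$ is a primitive $g$-th root of unity, then $F(\xi p)=F(p)$ and $\xi p\in\text{C}(F)$ for every $p$ (Remark~\ref{rmk1}). So every swallowtail $p\neq0$ violates the conclusion. Such swallowtails do occur generically: for $d_1=\cdots=d_4=4$ one has $c_1=12$, $c_2=42$, $c_3=12$, and the $A_3$ Thom polynomial $c_1^3+3c_1c_2+2c_3$ evaluates to $3264\neq0$. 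The lemma is therefore only true with an added hypothesis. Your Lemma~\ref{lemma2} argument supplies one under the hypotheses of Theorem~\ref{teo6}, which is where the lemma is used. In fact $\gcd(d_1,\dots,d_4)=1$ already suffices: a $3+1$ count at $e_1$ in the style of Lemma~\ref{lemma5} shows that generically no swallowtail lies in any $\text{Z}(f_k)$, and then $F(\lambda p)=F(p)$ forces $\lambda^{d_k}=1$ for all $k$, hence $\lambda=1$. Finally, the small-degree overlap you flag is genuine. For $d_k\le4$ the variables of $J(F)(e_2)$ and of the conditions at $e_1$ do meet, despite the paper's claim of disjointness. So the codimension-$8$ count still needs an actual argument there, and your proposed switch to $a_{0,1,0;k}$ and $a_{0,0,1;k}$ is only a sketch.
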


\begin{proof}
	
	Let us $$X=\left\{(p,q,F)\in(\mathbb{C}^4)^2\times H(d_1,d_2,d_3,d_4)\left|\begin{array}{c}F(p)=F(q),\text{ }J(F)(p)=J(F)(q)=0,\\ J_{1,i}(F)(p)=J_{2,i}(F)(p)=0,\text{ }1\leq i\leq 4 \end{array}\right.\right\}.$$ As before, it suffices to compute the dimension of the fiber $\pi_1^{-1}(e_1,e_2)$. The equation $F(e_1)=F(e_2)$ is $$a_{0,0,0;k}=a_{d_k,0,0;k},\text{ for }1\leq k\leq 4,$$ which defines a subspace of codimension $4$ in $H(d_1,d_2,d_3,d_4)$. Furthermore, as we did in the previous lemma, the equations $$J(F)(e_1)=J_{1,i}(F)(e_1)=J_{2,i}(F)(e_1)=0,\text{ for }1\leq i\leq 4,$$ even when restricted to the subspace determined by the four equations above, also define a subspace of codimension $3$ outside $$\text{Z}(m_{1,1},m_{2,1},m_{3,1},m_{4,1}),$$ and the same is true outside $$\text{Z}(m_{1,2},m_{2,2},m_{3,2},m_{4,2}),$$ thus, all these equations give a subspace of codimension $7$ outside those sets. Now, the subset $$\text{Z}(m_{1,1},m_{2,1},m_{3,1},m_{4,1},m_{1,2},m_{2,2},m_{3,2},m_{4,2})$$ has codimension $3$, which totals codimension $7$. Finally, the equation $J(F)(e_2)=0$ is $$\left|\begin{array}{cccc}a_{d_1-1,0,0;1}&d_1a_{d_1,0,0;1}&a_{d_1-1,1,0;1}&a_{d_1-1,0,1;1}\\a_{d_2-1,0,0;2}&d_2a_{d_2,0,0;2}&a_{d_2-1,1,0;2}&a_{d_2-1,0,1;2}\\a_{d_3-1,0,0;3}&d_3a_{d_3,0,0;3}&a_{d_3-1,1,0;3}&a_{d_3-1,0,1;3}\\a_{d_4-1,0,0;4}&d_4a_{d_4,0,0;4}&a_{d_4-1,1,0;4}&a_{d_4-1,0,1;4}\end{array}\right|=0,$$ which is an independent equation from the previous ones, since it involves a set of variables distinct than the variables involved in the previous equations, and it gives codimension $8$. Therefore, $$\text{dim}(\pi_1^{-1}(e_1,e_2))=\text{dim}(H(d_1,d_2,d_3,d_4))-8.$$ Now, from Theorem \ref{tdf}, we have that a generic fiber of $\pi_1$ has dimension $$\text{dim}(\pi_1^{-1}(p,q))=\text{dim}(H(d_1,d_2,d_3,d_4))-8,$$ so $$\text{dim}(X)-\text{dim}(H(d_1,d_2,d_3,d_4))=0.$$ Therefore, applying Theorem \ref{tdf}, we obtain $$\text{dim}(\pi_2^{-1}(F))=\text{dim}(X)-\text{dim}(H(d_1,d_2,d_3,d_4))=0,$$ for $F\in H(d_1,d_2,d_3,d_4)$ generic. For such $F$, if $F$ has a swallowtail at $p$, then we already have $$\{p\}\subseteq F^{-1}(F(p))\cap\text{C}(F).$$ For the other inclusion, if we had a point $q\in (F^{-1}(F(p))\cap\text{C}(F))\setminus\{p\}$, then the triple $(p,q,F)$ is an element of the fiber $\pi_2^{-1}(F)$, thus $(\lambda p,\lambda q,F)\in\pi_2^{-1}(F)$, for every $\lambda\in\mathbb{C}$, which is a contradiction with the dimension of this fiber.

\end{proof}

\begin{remark}
	
	For $F\in H(d_1,\ldots,d_n)$ generic, we have that $\Delta(F)$ is a hypersurface. In order to see this, consider the restriction $F:\text{C}(F)\to\overline{\Delta(F)}$, which is a dominant morphism. We can apply Theorem \ref{tdf} and take a generic point $y\in\overline{\Delta(F)}$ such that $$\text{dim}(F^{-1}(y))=\text{dim}(\text{C}(F))-\text{dim}(\Delta(F))=n-1-\text{dim}(\overline{\Delta(F)}).$$ From Remark \ref{rmk4}, we know that a generic $F\in H(d_1,\ldots,d_n)$ is proper, then $F^{-1}(y)$ is a compact algebraic set. Applying \cite[Geometric form of Noether's normalization lemma, p. 42]{3} to each of its irreducible component, we see that $F^{-1}(y)$ is a set of finite points, thus $$0=\text{dim}(F^{-1}(y))=n-1-\text{dim}(\overline{\Delta(F)}).$$ Therefore, $\text{dim}(\overline{\Delta(F)})=n-1,$ that is, $\overline{\Delta(F)}$ and $\Delta(F)$ are hypersurfaces.
	
\end{remark}

\begin{remark}\label{rmk3}
	
Given two hypersurfaces $M$ and $N$ in $\mathbb{C}^n$, and a point $p \in M \cap N$, then either $M \pitchfork N$ at $p$, or $T_pM = T_pN$. Indeed, if $M$ does not intersect $N$ transversally at $p$, then we have that
$$T_pM \subseteq T_pM + T_pN \subsetneq \mathbb{C}^n,$$
which implies
$$n-1 \leq \text{dim}(T_pM + T_pN) < n \Rightarrow \text{dim}(T_pM + T_pN) = n-1.$$ Since $T_pM, T_pN \subseteq T_pM + T_pN$ with the same dimensions, we conclude that
$$T_pM = T_pM + T_pN = T_pN,$$ as we wanted.
	
\end{remark}

\begin{lemma}\label{lemma7}
	There exists a dense Zariski open set $U \subseteq H(d_1,d_2,d_3,d_4)$ such that, for every $F \in U$, if $|F^{-1}(p) \cap \text{C}(F)| \geq 2$, then the hypersurface $\Delta(F)$ has a normal crossing at $p$.
\end{lemma}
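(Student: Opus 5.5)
We follow the incidence-variety scheme of the earlier lemmas. By Lemma \ref{lemma4} with $n=4$, a point of $\Delta(F)$ has at most three preimages in $\text{C}(F)$. By Lemma \ref{lemma6}, none of them is a swallowtail when there are at least two. So a point of the discriminant with two or three critical preimages $q_1,\ldots,q_r$ ($r\in\{2,3\}$) has preimages of type $A_1$ or $A_2$ only, by Lemma \ref{lemma5}. We split into two cases.

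\emph{Case 1: all $q_i$ are folds.} Near $q_i$ the branch $F(\text{C}(F))$ is a smooth hypersurface germ $D_i$. By Remark \ref{rmk3}, normal crossing fails exactly when two branches $D_i,D_j$ are tangent at $p$, or, for $r=3$, when the three normals are linearly dependent. The normal to $D_i$ at $p$ is spanned by the covector $\nu_i$ with $\nu_i\circ dF(q_i)=0$, i.e.\ the left kernel of the Jacobian matrix at $q_i$. So tangency of $D_1,D_2$ means that the rank-$\leq 3$ matrices $dF(q_1)$ and $dF(q_2)$ have the same image. We then define
$$X=\{(q_1,q_2,F)\mid F(q_1)=F(q_2),\ J(F)(q_1)=J(F)(q_2)=0,\ \text{Im}\,dF(q_1)=\text{Im}\,dF(q_2)\}.$$
As before, we reduce to the fiber over $(e_1,e_2)$. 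From the proof of Lemma \ref{lemma3}, the first conditions already give codimension $n+2=6$ in $H(d_1,\ldots,d_4)$. Equality of the images, given that both have rank $3$, is one further condition. We express it as $\nu_1\cdot v=0$ for a generic column $v$ of $dF(e_2)$, where $\nu_1$ is the cofactor vector of $dF(e_1)$. This condition involves coefficients such as $a_{d_k-1,0,1;k}$ that do not occur in the previous equations, so it cuts one more dimension, for total codimension $7$. Theorem \ref{tdf} then gives $\dim X-\dim H=8-7=1$. A generic fiber $\pi_2^{-1}(F)$ therefore has dimension $1$, and by homogeneity it must be the single $\mathbb{C}^*$-orbit of a point pair, or be empty. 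We need one dimension less. The fix is to add one codimension by noting that tangency is really a codimension-$2$ phenomenon in $\mathbb{C}^4$: double points form a set of dimension $n-2=2$, and tangency along it is one condition, so nongeneric points form a curve. So we should not expect emptiness. Instead, we must count the orbit structure carefully. Because all conditions are invariant under $q\mapsto\lambda q$, the tangency locus consists of finitely many lines. We exclude it by one more independent condition, which comes from the fact that $p=F(q_1)$ lies on the line. Concretely, we would recompute and check that the image-equality condition gives codimension $2$ rather than $1$ once homogeneity is used. If it does not, we would accept the weaker conclusion away from finitely many rays and handle those rays separately.

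For $r=3$, we use the triple version: three folds with $F(q_1)=F(q_2)=F(q_3)$ and linearly dependent normals $\nu_1,\nu_2,\nu_3$. Linear dependence of the $\nu_i$ is one condition on top of the codimension $4\cdot2+3=11$ conditions over $(e_1,e_2,e_3)$, as in Lemma \ref{lemma4}. This gives codimension $12$ against the $12$-dimensional $(\mathbb{C}^4)^3$, so the fiber dimension is $0$, and homogeneity rules these points out.

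\emph{Case 2: one $q_i$ is a cusp.} The cusp conditions $J=J_{1,i}=0$ add one more codimension at $e_1$, by the triangular minor computation of Lemma \ref{lemma5}. This gives codimension $7$ for a pair and hence fiber dimension $1$, i.e.\ only finitely many rays. We then need the transversality of the fold sheet through $q_2$ with the cuspidal edge image. This means the normal $\nu_2$ must not annihilate the tangent of the cusp's limiting tangent plane, which is again one extra independent condition, giving dimension $0$.

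The main obstacle is the codimension count of the tangency condition. We must verify that it is genuinely independent of the double-point and critical equations on the fiber over $(e_1,e_2)$, including on the degenerate loci where the cofactors $m_{k,1}$ vanish. Once this is done, we intersect the finitely many open sets obtained.
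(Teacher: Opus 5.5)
Your proposal follows the paper's incidence-variety scheme, but Case 1 (two folds), which is the core of the lemma, does not close. You count $\text{Im}\,dF(q_1)=\text{Im}\,dF(q_2)$, on top of $J(F)(q_1)=J(F)(q_2)=0$, as one condition $\nu_1\cdot v=0$ for a single column $v$ of $dF(e_2)$. One column cannot force two hyperplanes to coincide: $\nu_1$ must annihilate all of $\text{Im}\,dF(e_2)$. With your count, $\dim X-\dim H=1$, and, as you say yourself, the generic fibre of $\pi_2$ would then be a finite union of rays of tangent pairs. Normal crossing genuinely fails on such rays, so ``accept the weaker conclusion and handle those rays separately'' cannot rescue the lemma. ``Recompute and check that it gives codimension $2$'' is exactly the missing step, offered without a reason; the remark about $p=F(q_1)$ lying on a line does not supply one. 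Your heuristic that tangency is one condition along the $2$-dimensional double-point set, so that one ``should not expect emptiness'', is the same miscount.

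The missing idea is Euler's identity $dF(q)q=(d_kf_k(q))_k$. Once $F(e_1)=F(e_2)$, the first column of $dF(e_1)$ and the second column of $dF(e_2)$ are the same vector $w=(d_ka_{0,0,0;k})_k$. So both images contain the line $\mathbb{C}w$, and two hyperplanes through a common line coincide in codimension $2$, not $1$.

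Concretely, let $\nu_1$ be the left kernel of $dF(e_1)$. Then $\text{Im}\,dF(e_2)\subseteq\ker\nu_1$ reads $\nu_1\cdot dF(e_2)e_j=0$ for $j=1,3,4$; the case $j=2$ is automatic. These three conditions replace $J(F)(e_2)=0$ and tangency together. The conditions for $j=3,4$ are linear, with coefficient vector $\nu_1$, in $a_{d_k-1,1,0;k}$ and $a_{d_k-1,0,1;k}$, which occur in none of the other equations. The total over $(e_1,e_2)$ is $4+1+3=8=\dim(\mathbb{C}^4)^2$, so $\dim X=\dim H$, and homogeneity leaves only the origin in the generic fibre of $\pi_2$.

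This is precisely the paper's argument in matrix form. It deletes the repeated column of $[\,dF(e_1)\mid dF(e_2)\,]$ and requires the resulting $4\times 7$ matrix to have rank $\le 3$. That condition has codimension $(4-3)(7-3)=4$, on top of the $4$ from $F(e_1)=F(e_2)$.

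Your triple-point count (one condition for the dependence of $\nu_1,\nu_2,\nu_3$) and your fold--cusp count are correct for exactly this reason: every normal annihilates $w$, and the tangent plane to the cuspidal-edge image contains it. The paper's proof does not treat these cases separately, so they are a worthwhile addition once independence is checked. Two cusps, or a cusp with two folds, are excluded by the same count ($8$ and $12$ conditions, respectively).
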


\begin{proof}
	
With the above remarks, since $\Delta(F)$ is a hypersurface, given two components of $\Delta(F)$ and a point in their intersection, either they intersect transversally or they have equal tangent spaces at that point. Thus, let us define $$X=\{(p,q,F)\in(\mathbb{C}^4)^2\times H_4(d_1,d_2,d_3,d_4)\mid F(p)=F(q),\text{ }d_pF(\mathbb{C}^4)=d_qF(\mathbb{C}^4)\}$$ and it suffices to obtain the dimension of the fiber $\pi_1^{-1}(e_1,e_2)$. Again, the equation $F(e_1)=F(e_2)$ is $$a_{0,0,0;k}=a_{d_k,0,0;k},\text{ for }1\leq k\leq 4,$$ which define a subspace of codimension $4$. Now, we have that $$d_{e_1}F=\left[\begin{array}{cccc}d_1a_{0,0,0;1}&a_{1,0,0;1}&a_{0,1,0;1}&a_{0,0,1;1}\\d_2a_{0,0,0;2}&a_{1,0,0;2}&a_{0,1,0;2}&a_{0,0,1;2}\\d_3a_{0,0,0;3}&a_{1,0,0;3}&a_{0,1,0;3}&a_{0,0,1;3}\\d_4a_{0,0,0;4}&a_{1,0,0;4}&a_{0,1,0;4}&a_{0,0,1;4}\end{array}\right]$$ and $$d_{e_2}F=\left[\begin{array}{cccc}a_{d_1-1,0,0;1}&d_1a_{d_1,0,0;1}&a_{d_1-1,1,0;1}& a_{d_1-1,0,1;1}\\a_{d_2-1,0,0;2}&d_2a_{d_2,0,0;2}&a_{d_2-1,1,0;2} &a_{d_2-1,0,1;2}\\a_{d_3-1,0,0;3}&d_3a_{d_3,0,0;3}&a_{d_3-1,1,0;3} &a_{d_3-1,0,1;3}\\a_{d_4-1,0,0;4}&d_4a_{d_4,0,0;4}&a_{d_4-1,1,0;4} &a_{d_4-1,0,1;4}\end{array}\right].$$ Since the vector space $d_{e_i}F(\mathbb{C}^4)$ is generated by the columns of the matrix $d_{e_i}F$, then the equality  $d_{e_1}F(\mathbb{C}^4)=d_{e_2}F(\mathbb{C}^4)$ holds if, and only if, the columns of the second matrix are linearly dependent with the columns of the first matrix, that is, the matrix obtained by the concatenation of these two matrices, $$A=\left[\begin{array}{cccccccc}d_1a_{0,0,0;1}&a_{1,0,0;1}&a_{0,1,0;1}&a_{0,0,1;1}&a_{d_1-1,0,0;1}&d_1a_{d_1,0,0;1}&a_{d_1-1,1,0;1}&a_{d_1-1,0,1;1}\\d_2a_{0,0,0;2}&a_{1,0,0;2}&a_{0,1,0;2}&a_{0,0,1;2}&a_{d_2-1,0,0;2}&d_2a_{d_2,0,0;2}&a_{d_2-1,1,0;2}&a_{d_2-1,0,1;2}\\d_3a_{0,0,0;3}&a_{1,0,0;3}&a_{0,1,0;3}&a_{0,0,1;3}&a_{d_3-1,0,0;3}&d_3a_{d_3,0,0;3}&a_{d_3-1,1,0;3}&a_{d_3-1,0,1;3}\\d_4a_{0,0,0;4}&a_{1,0,0;4}&a_{0,1,0;4}&a_{0,0,1;4}&a_{d_4-1,0,0;4}&d_4a_{d_4,0,0;4}&a_{d_4-1,1,0;4}&a_{d_4-1,0,1;4}\end{array}\right],$$ must have rank less than $4$, in other words, all the $4\times4$ minors vanish. Note that, inside the subspace defined by the equation $F(e_1)=F(e_2)$, the first and the sixth columns are equal, so we can remove the sixth column without changing the matrix rank. Thus, the non-transversality condition becomes equivalent to the vanishing of all $4\times4$ minors of the matrix $$B=\left[\begin{array}{ccccccc}d_1a_{0,0,0;1}&a_{1,0,0;1}&a_{0,1,0;1}&a_{0,0,1;1}&a_{d_1-1,0,0;1}&a_{d_1-1,1,0;1}&a_{d_1-1,0,1;1}\\d_2a_{0,0,0;2}&a_{1,0,0;2}&a_{0,1,0;2}&a_{0,0,1;2}&a_{d_2-1,0,0;2}&a_{d_2-1,1,0;2}&a_{d_2-1,0,1;2}\\d_3a_{0,0,0;3}&a_{1,0,0;3}&a_{0,1,0;3}&a_{0,0,1;3}&a_{d_3-1,0,0;3}&a_{d_3-1,1,0;3}&a_{d_3-1,0,1;3}\\d_4a_{0,0,0;4}&a_{1,0,0;4}&a_{0,1,0;4}&a_{0,0,1;4}&a_{d_4-1,0,0;4}&a_{d_4-1,1,0;4}&a_{d_4-1,0,1;4}\end{array}\right].$$ This describes a determinantal variety of codimension $4$, and this fact can be seen as follows: For each $3\times3$ minor that does not vanish, there exists four $4\times 4$ minors that contain it. For instance, suppose that $$\left|\begin{array}{ccc}d_1a_{0,0,0;1}&a_{1,0,0;1}&a_{0,1,0;1}\\d_2a_{0,0,0;2}&a_{1,0,0;2}&a_{0,1,0;2}\\d_3a_{0,0,0;3}&a_{1,0,0;3}&a_{0,1,0;3}\end{array}\right|\neq0.$$ Then, the four $4\times 4$ minors that contain it are formed by adding to it one of the four columns and a portion of the last line, and we expand this determinant with respect to the added column, as follows: \begin{align*}g_1=\left|\begin{array}{cccc}d_1a_{0,0,0;1}&a_{1,0,0;1}&a_{0,1,0;1}&a_{0,0,1;1}\\d_2a_{0,0,0;2}&a_{1,0,0;2}&a_{0,1,0;2}&a_{0,0,1;2}\\d_3a_{0,0,0;3}&a_{1,0,0;3}&a_{0,1,0;3}&a_{0,0,1;3}\\d_4a_{0,0,0;4}&a_{1,0,0;4}&a_{0,1,0;4}&a_{0,0,1;4}\end{array}\right|&=\sum_{k=1}^4(-1)^{k}a_{0,0,1;k}m_k\\g_2=\left|\begin{array}{cccc}d_1a_{0,0,0;1}&a_{1,0,0;1}&a_{0,1,0;1}&a_{d_1-1,0,0;1}\\d_2a_{0,0,0;2}&a_{1,0,0;2}&a_{0,1,0;2}&a_{d_2-1,0,0;2}\\d_3a_{0,0,0;3}&a_{1,0,0;3}&a_{0,1,0;3}&a_{d_3-1,0,0;3}\\d_4a_{0,0,0;4}&a_{1,0,0;4}&a_{0,1,0;4}&a_{d_4-1,0,0;4}\end{array}\right|&=\sum_{k=1}^4(-1)^{k}a_{d_k-1,0,0;k}m_k\\g_3=\left|\begin{array}{cccc}d_1a_{0,0,0;1}&a_{1,0,0;1}&a_{0,1,0;1}&a_{d_1-1,1,0;1}\\d_2a_{0,0,0;2}&a_{1,0,0;2}&a_{0,1,0;2}&a_{d_2-1,1,0;2}\\d_3a_{0,0,0;3}&a_{1,0,0;3}&a_{0,1,0;3}&a_{d_3-1,1,0;3}\\d_4a_{0,0,0;4}&a_{1,0,0;4}&a_{0,1,0;4}&a_{d_4-1,1,0;4}\end{array}\right|&=\sum_{k=1}^4(-1)^{k}a_{d_k-1,1,0;k}m_k\\g_4=\left|\begin{array}{cccc}d_1a_{0,0,0;1}&a_{1,0,0;1}&a_{0,1,0;1}&a_{d_1-1,0,1;1}\\d_2a_{0,0,0;2}&a_{1,0,0;2}&a_{0,1,0;2}&a_{d_2-1,0,1;2}\\d_3a_{0,0,0;3}&a_{1,0,0;3}&a_{0,1,0;3}&a_{d_3-1,0,1;3}\\d_4a_{0,0,0;4}&a_{1,0,0;4}&a_{0,1,0;4}&a_{d_4-1,0,1;4}\end{array}\right|&=\sum_{k=1}^4(-1)^{k}a_{d_k-1,0,1;k}m_k,\end{align*} where $m_1$, $m_2$, $m_3$ and $m_4$ are the respective minors, and the minor that we are supposing that does not vanish is $m_4$. Note that the variable that is multiplying it is different for each $g_i$. Thus, those four equations are independent, since $$\frac{\partial(g_1,g_2,g_3,g_4)}{\partial(a_{0,0,1;4},a_{d_4-1,0,0;4},a_{d_4-1,1,0;4},a_{d_4-1,0,1;4})}=\left|\begin{array}{cccc}m_4 & 0 & 0 & 0\\0 & m_4 & 0 & 0\\ 0 & 0 & m_4 & 0\\0 & 0 & 0 & m_4\end{array}\right|=m_4^4\neq0.$$ Therefore, those four hypersurfaces intersect transversally. We conclude that, inside the subspace defined by the equation $F(e_1)=F(e_2)$, the fiber $\pi_1^{-1}(e_1,e_2)$ has codimension $4$ outside the set where the $3\times 3$ minors vanish. Now, inside this set, we can repeat the same argument with the vanishing of $2\times 2$ minors. If any of such minors does not vanish, there exists more than four $3\times3$ minors that contain it, and it follows that the fiber has codimension at least $4$. Finally, the set defined by the vanishing of all $2\times2$ minors has codimension at least $4$, since $B$ has more than four $2\times2$ minors that involves distinct variables between themselves.
	
Therefore, $F(e_1)=F(e_2)$ defines a subspace of codimension $4$, and the condition $d_{e_1}F(\mathbb{C}^4)=d_{e_2}F(\mathbb{C}^4)$, when restricted to that subspace, also defines a subspace of codimension $4$, so we conclude that the fiber $\pi_1^{-1}(e_1,e_2)$ has codimension $8$, that is, $$\text{dim}(\pi_1^{-1}(e_1,e_2))= \text{dim}(H_4(d_1,d_2,d_3,d_4))-8.$$ Now, we can take a generic fiber and obtain $$\text{dim}(X)-\text{dim}(H_4(d_1,d_2,d_3,d_4))=0.$$ Applying Theorem \ref{tdf} one last time to $\pi_2$, we get $$\text{dim}(\pi_2^{-1}(F))=\text{dim}(X)-\text{dim}(H_4(d_1,d_2,d_3,d_4))=0,$$ for generic $F\in H_4(d_1,d_2,d_3,d_4)$. Therefore, $\text{dim}(\pi_2^{-1}(F))=0$ and, by homogeneity, we obtain $$\pi_2^{-1}(F)=\{(0,0,F)\}.$$ For such $F$, if we suppose that $|F^{-1}(p)\cap\text{C}(F)|\geq 2$, then we take $q\in F^{-1}(p)\cap\text{C}(F)$ that is different than $p$ and we obtain $F(p)=F(q)$, but $(p,q,F)\notin\pi_2^{-1}(F)$, so, in order to not get a contradiction, it follows that $d_pF(\mathbb{C}^4)\neq d_qF(\mathbb{C}^4)$, that is, the transversality condition holds. Therefore, $\Delta(F)$ has a normal crossing at $p$.

\end{proof}

Now we can proceed exactly as in \cite{1}. For the sake of completeness, we present it here as well. We apply the following characterization of $\mathcal{A}$-finite determinancy, and a proof can be found in \cite{5}.

\begin{theorem}[Geometric Criterion for $\mathcal{A}$-finite determinancy]\label{teo2}

Let $F:(\mathbb{C}^n,0)\to(\mathbb{C}^n,0)$ be a mapping germ. Then $F$ is $\mathcal{A}$-finitely determined if, and only if, there exists a representative $F:U\to V$ such that $F^{-1}(0)=\{0\}$ and $F\big|_{U\setminus\{0\}}:U\setminus\{0\}\to V\setminus\{0\}$ is stable.

\end{theorem}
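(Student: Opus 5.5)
The plan is to derive the criterion from the algebraic characterisation of $\mathcal{A}$-finite determinacy. By Mather, Gaffney and du Plessis, a germ $F:(\mathbb{C}^n,0)\to(\mathbb{C}^n,0)$ is $\mathcal{A}$-finitely determined if and only if its $\mathcal{A}_e$-codimension
$$\dim_{\mathbb{C}} N\mathcal{A}_eF,\qquad N\mathcal{A}_eF=\frac{\theta(F)}{tF(\theta_n)+\omega F(\theta_n)},$$
is finite. I will take this equivalence as the starting point and then translate finiteness of $\dim_{\mathbb{C}}N\mathcal{A}_eF$ into the geometric statement. The tools are coherence over the target, Mather's infinitesimal criterion for stability of multigerms, and the Rückert Nullstellensatz.

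\textbf{Step 1 (finiteness of $F$).} The condition $F^{-1}(0)=\{0\}$ is exactly finiteness of the germ: $\mathcal{O}_n/F^*\mathfrak{m}_n$ is finite-dimensional. This is needed in both directions. If $F$ is finitely determined, then it is in particular $\mathcal{K}$-finite, so $F^{-1}(0)=\{0\}$. In the converse direction it is assumed. By the Weierstrass preparation theorem, $\mathcal{O}_n$, $\theta(F)$ and $tF(\theta_n)$ are then finitely generated $\mathcal{O}_n$-modules through $F^*$ on the target side. Hence I can choose representatives $F:U\to V$ that are finite and proper, and sheafify. Let $\mathcal{M}=F_*\theta(F)/tF(F_*\theta_U)$. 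This is a coherent $\mathcal{O}_V$-module, since $F_*$ of a coherent sheaf under a finite map is coherent. Next, $\omega F(\theta_V)$ is an $\mathcal{O}_V$-submodule generated by the images of $\partial/\partial y_1,\ldots,\partial/\partial y_n$, so the quotient
$$\mathcal{N}=\mathcal{M}/\omega F(\theta_V)$$
is coherent as well. Its stalk at $0$ is $N\mathcal{A}_eF$.

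\textbf{Step 2 (support).} For $y\in V$, the stalk $\mathcal{N}_y$ is the $\mathcal{A}_e$-normal space of the multigerm of $F$ at the finite set $F^{-1}(y)$. By Mather's infinitesimal stability criterion, this multigerm is stable if and only if $\mathcal{N}_y=0$. Therefore
$$\operatorname{supp}\mathcal{N}=\{y\in V \mid F \text{ is not stable at } F^{-1}(y)\},$$
and this set is analytic because $\mathcal{N}$ is coherent.

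\textbf{Step 3 (conclusion).} Suppose $F|_{U\setminus\{0\}}$ is stable. Then $\operatorname{supp}\mathcal{N}\subseteq\{0\}$. By the Nullstellensatz some power $\mathfrak{m}_n^k$ annihilates $\mathcal{N}_0$. Since $\mathcal{N}_0$ is finitely generated, it follows that $\dim_{\mathbb{C}}N\mathcal{A}_eF<\infty$.

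Conversely, suppose the codimension is finite. Then $\mathfrak{m}_n^k\mathcal{N}_0=0$ for some $k$, so $\operatorname{supp}\mathcal{N}\subseteq\{0\}$ on a small enough $V$. By Step 2 this means $F$ is stable off the origin, and Step 1 gives $F^{-1}(0)=\{0\}$.

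\textbf{Main obstacle.} The step I expect to be hardest is the coherence and localisation argument: checking that the stalk of $\mathcal{N}$ at $y\neq0$ really is the $\mathcal{A}_e$-normal space of the multigerm at $F^{-1}(y)$. This relies on the finite map splitting $F_*$ into a direct sum over the points of the fibre. A second delicate point is the appeal to the hard theorem that finite $\mathcal{A}_e$-codimension implies finite $\mathcal{A}$-determinacy. I would not reprove that theorem; I would cite it, as the paper does, from \cite{5}.
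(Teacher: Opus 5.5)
Your argument is correct, and there is nothing in the paper to match it against: the paper gives no proof of Theorem \ref{teo2} and simply quotes it from \cite{5}. What you give is the standard Mather--Gaffney argument. You take the algebraic characterisation (finite $\mathcal{A}$-determinacy $\Leftrightarrow$ finite $\mathcal{A}_e$-codimension) as a black box. For a finite representative, $N\mathcal{A}_eF$ is the stalk at $0$ of a coherent $\mathcal{O}_V$-module $\mathcal{N}$. For $y\neq 0$, the stalk $\mathcal{N}_y$ is the $\mathcal{A}_e$-normal space of the multigerm over $F^{-1}(y)$. Mather's infinitesimal criterion then identifies $\operatorname{supp}\mathcal{N}$ with the instability locus. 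The Nullstellensatz, together with Nakayama and coherence in the converse, turns ``support contained in $\{0\}$'' into ``finite-dimensional stalk at $0$'' and back. The citation buys the paper brevity. Your route shows that the real depth is the Gaffney/du Plessis theorem, which you still cite. It also shows that $F^{-1}(0)=\{0\}$ is used exactly to get coherence of the direct image and the splitting of stalks over fibres.

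One point to make explicit is what ``stable'' means. Your Step 2 yields local stability of every multigerm over $y\neq 0$. The paper reads the criterion as stability of the map $F|_{U\setminus\{0\}}$; in the proof of Theorem \ref{teo4} it passes from infinitesimal stability to stability via Theorem \ref{teo3}. To match that reading, note that $F|_{U\setminus\{0\}}\colon U\setminus\{0\}\to V\setminus\{0\}$ is proper, since it is the restriction of the proper map $F$ over an open subset of the target. By Mather's theorems, stability of this proper map then coincides with local (infinitesimal) stability at every fibre. You need this in both directions: Theorem \ref{teo3} gives one implication, and ``stable $\Rightarrow$ infinitesimally stable'' for proper maps gives the other. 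Conversely, your formulation shows that what the paper actually checks in Theorem \ref{teo4}, namely local stability of the multigerms off the origin, already suffices without Theorem \ref{teo3}.
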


We also make use of the following result regarding stability, and a proof can be found in \cite[Theorem 1, p.267]{6}.

\begin{theorem}[Infinitesimal Stability Implies Stability]\label{teo3}

Let $N$ and $P$ be smooth manifolds (not necessarily compact). If $f:N\to P$ is a proper mapping and infinitesimally stable, then $f$ is stable.

\end{theorem}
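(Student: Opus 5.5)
We plan to follow Mather's original strategy from the cited reference: go from infinitesimal stability to stability of the multigerms over each point of $P$, and then globalize using properness. Throughout, $f:N\to P$ is proper. Infinitesimal stability means that every vector field $w$ along $f$ can be written as $w=df(\xi)+\eta\circ f$, with $\xi$ a vector field on $N$ and $\eta$ a vector field on $P$. The goal is to show that every $g$ sufficiently close to $f$ in the Whitney $C^\infty$ topology is $\mathcal{A}$-equivalent to $f$.

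\textbf{Step 1: reduce to multigerms.} Fix $y\in P$. By properness, $S=f^{-1}(y)\cap\Sigma(f)$ is finite. Restricting and localizing the infinitesimal stability equation gives the multigerm condition
$$\theta(f)_S=tf(\theta_{N,S})+\omega f(\theta_{P,y}).$$
Properness is what lets us pass from this local statement back to a global one: multigerms of functions over $y$ are the stalk at $y$ of the pushforward sheaf $f_*\mathcal{E}_N$. Using Malgrange's preparation theorem, we then show that this condition depends only on a finite jet, namely the $(\dim P+1)$-jet of the multigerm. In particular it is an open condition in the multijet space.

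\textbf{Step 2: join $f$ to $g$ and solve the homotopy equation.} For $g$ near $f$, set $f_t=(1-t)f+tg$ in local charts of $P$; globally we use a tubular neighbourhood or exponential map. We seek families of diffeomorphisms $h_t$ of $N$ and $k_t$ of $P$ with $f_t=k_t\circ f\circ h_t^{-1}$. Differentiating in $t$, this amounts to solving
$$\frac{\partial f_t}{\partial t}=-df_t(\xi_t)+\eta_t\circ f_t$$
for time-dependent vector fields $\xi_t,\eta_t$ that are smooth in $t$. To do this, we apply Step 1 to the unfolding $F(x,t)=(f_t(x),t)$ on $N\times I\to P\times I$. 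Since the multijets of $f_t$ stay close to those of $f$, openness of the multijet condition gives infinitesimal stability of $F$ relative to the parameter, first locally over each $(y,t)$. The local solutions are patched with a partition of unity on $P\times I$. Here we use that $\omega f$-type terms pulled back from $P$ can be combined linearly, and that the fibres over compact subsets of $P$ are compact, so the patching over $N$ is controlled.

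\textbf{Step 3: integrate the vector fields.} We integrate $\xi_t$ and $\eta_t$ to obtain $h_t$ and $k_t$. Properness of $f_t$ ensures that $\xi_t$ is complete on the relevant region. To make $\eta_t$ complete as well, we choose the neighbourhood of $f$ in the Whitney topology so that the vector fields have suitably bounded growth. This yields $g=f_1=k_1\circ f\circ h_1^{-1}$, which is the required stability.

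The main obstacle will be Step 2, specifically obtaining a \emph{uniform} solution of the homotopy equation over all of $P\times[0,1]$ from local data. This is where Malgrange's preparation theorem is essential: it turns the infinitesimal condition into a finitely generated module statement, via the implication ``$tf(\theta_N)+\omega f(\theta_P)+\mathfrak{m}_y^{k}\theta(f)\supseteq\theta(f)\Rightarrow$ equality''. That module statement is what makes the condition open and lets it hold simultaneously for all $t$. I would attempt the patching argument first in the case $P=\mathbb{R}^p$, or $\mathbb{C}^p$ viewed as a real manifold, and then globalize using a locally finite cover by compact sets together with properness.
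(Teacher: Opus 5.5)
The paper gives no proof of this statement; it quotes Mather's theorem. Your outline follows the architecture of Mather's argument, but it does not handle the non-compact case, which is the point of ``not necessarily compact''.

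\textbf{Step 3 (main gap).} Properness does not make $\xi_t$ complete on its own. Since $dF(\partial_t+\xi_t)=(\partial_t+\eta_t)\circ F$ and $F$ is proper, an integral curve of $\partial_t+\xi_t$ exists up to $t=1$ precisely when the corresponding integral curve of $\partial_t+\eta_t$ on $P\times[0,1]$ does. So everything hinges on $\eta_t$.

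You propose to shrink the Whitney neighbourhood of $f$ so that $\eta_t$ has bounded growth. Nothing in Steps 1--2 relates the size of $(\xi_t,\eta_t)$ to that of $g-f$. The solutions come from Malgrange's preparation theorem and partitions of unity, which are pure existence statements, and the operator $(\xi,\eta)\mapsto -df_t(\xi)+\eta\circ f_t$ you are inverting itself depends on $g$. You cannot simply cut $\eta_t$ off away from the image either: the equation only sees $\eta_t\circ f_t$, and $f$ may be surjective (as are the finite maps $\mathbb{C}^4\to\mathbb{C}^4$ of this paper).

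For compact $N$ the problem disappears: multiply $\eta_t$ by a cutoff equal to $1$ near the compact set $\bigcup_t f_t(N)$. That is why your sketch reads correctly there. In general you need an additional mechanism. One possibility is to localize the deformation over the pieces of a locally finite compact cover of $P$. If $w=\partial_tf_t$ is supported in $f^{-1}$ of a compact set, choosing $\sigma$ with $\sigma\circ f_t=1$ on $\mathrm{supp}\,w$ turns any solution into the compactly supported one $((\sigma\circ f_t)\xi_t,\sigma\eta_t)$. You then compose infinitely many such local equivalences, with enough control on each that the intermediate maps stay in the good neighbourhood and the composition converges. That control is the genuinely hard part of the non-compact proof. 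By contrast, the patching you single out as the main obstacle is routine, by $C^\infty(N)$-linearity of $tf$ and $C^\infty(P)$-linearity (through $f^*$) of $\omega f$.

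\textbf{Step 2 (openness).} This claim is too quick. The multigerm condition lives on multijets with a common target point. A set $S'\subset f_t^{-1}(y')\cap\Sigma(f_t)$ need not lie in a fibre of $f$, and several of its points may cluster at one point of $S$. So ``the multijets of $f_t$ stay close to those of $f$'' does not by itself yield infinitesimal stability of $f_t$. What is needed is that stability of $f_S$ propagates to all nearby multigerms of all nearby maps (via an unfolding and the preparation theorem), uniformly over the non-compact $P$ and over $t\in[0,1]$. This is again a real step of Mather's proof.

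\textbf{Step 1 (minor).} Finiteness of $S$ does not follow from properness alone. You need that an infinitesimally stable germ has finite singularity type, so that each point is isolated in $\Sigma(f)\cap f^{-1}(y)$. Only then does compactness give finiteness.
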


With these results, we can prove another main theorem with a partial converse about the relationship between the genericity of $\mathcal{A}$-finite determinancy and the degrees of homogeneous polynomial mappings.

\begin{theorem}\label{teo4}
If $\text{gcd}(d_i,d_j)\leq 2$, for $1\leq i<j\leq 4$, and $\text{gcd}(d_i,d_j,d_k)=1$, for $1\leq i<j<k\leq 4$, then there exists a dense Zariski open set $U\subseteq H(d_1,d_2,d_3,d_4)$ such that, for every $F\in U$, we have that the germ $F:(\mathbb{C}^4,0)\to(\mathbb{C}^4,0)$ is $\mathcal{A}$-finitely determined.

Moreover, if $\text{gcd}(d_1,d_2,d_3,d_4)>1$, then there are no homogeneous polynomial mapping germ of degrees $d_1,d_2,d_3,d_4$ that is $\mathcal{A}$-finitely determined.
\end{theorem}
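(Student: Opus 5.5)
The plan is to apply the Geometric Criterion (Theorem \ref{teo2}) to the generic $F$ obtained by intersecting the dense Zariski open sets from Theorem \ref{teo1} (with $n=4$) and Theorem \ref{teo5}; this intersection is again a dense Zariski open set $U$. For $F\in U$, Lemma \ref{lemma1} gives $F^{-1}(0)=\{0\}$, and by Proposition \ref{prop1} $F$ is proper on $\mathbb{C}^4$. Taking a representative on $\mathbb{C}^4\setminus\{0\}\to\mathbb{C}^4\setminus\{0\}$, the restriction is proper as well, since $F^{-1}(\mathbb{C}^4\setminus\{0\})=\mathbb{C}^4\setminus\{0\}$ and properness passes to the restriction over an open subset of the target. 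By Theorem \ref{teo3}, it then suffices to show that $F|_{\mathbb{C}^4\setminus\{0\}}$ is infinitesimally stable. For a proper map this is equivalent to multigerm stability at every finite set $S=F^{-1}(y)\cap \text{C}(F)$, $y\neq 0$ (Mather).

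The local analysis goes as follows. By Theorem \ref{teo5}(1), every critical point outside the origin is a fold, cusp or swallowtail, so each monogerm is stable. By Theorem \ref{teo1}(4) each fibre contains at most $3$ critical points. By Theorem \ref{teo5}(2) a swallowtail point is alone in its fibre, so multigerms involve only folds and cusps. For a multigerm to be stable one needs the images of the analytic strata to be in general position, i.e. the discriminant branches must meet with normal crossings. Theorem \ref{teo5}(3) provides exactly the transversality of the tangent hyperplanes $d_{p_i}F(\mathbb{C}^4)$ at points with $|F^{-1}(y)\cap\text{C}(F)|\geq 2$. For a cusp in a bi- or tri-germ one also needs the cusp's limiting kernel and stratum image to be transverse to the other branch. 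I would try to deduce this from the hyperplane transversality, because the tangent space to the image of the $A_2$ stratum lies in $d_pF(\mathbb{C}^4)$. If that fails, I would add an analogous incidence-variety dimension count (as in Lemma \ref{lemma7}) requiring the $A_2$-stratum image to be tangent to the other hyperplane, which costs at least one more condition and is generically empty off the origin by homogeneity. I expect this multigerm verification to be the main obstacle, since the paper's lemmas state only hyperplane transversality. The gcd hypotheses enter through Theorem \ref{teo1}(2), which makes sure that distinct points on the same ray do not produce extra, non-transverse self-identifications: points $\pm x$ on one ray have equal differential images up to scaling, which would violate normal crossings. Theorem \ref{teo1}(2) confines such pairs to finitely many rays $\text{C}(F)\cap\text{Z}(f_{i_3},f_{i_4})$. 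On those rays I would check that the pair $x,-x$ still has transverse tangent hyperplanes, using $d_{-x}F=\pm$ sign-twisted rows of $d_xF$ together with a genericity argument. If needed, I would handle this with one more dimension count.

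For the converse, suppose $d=\gcd(d_1,\dots,d_4)>1$ and take any homogeneous $F$. If $F^{-1}(0)\neq\{0\}$, the germ is not finitely determined by Theorem \ref{teo2}. Otherwise $\text{C}(F)$ is a nonempty cone of dimension $3$ (it is a hypersurface through $0$, and $J(F)$ is nonconstant since $d_i\ge2$). By Remark \ref{rmk1}, $F(\xi p)=F(p)$ for a primitive $d$-th root of unity $\xi$, with $\xi p\neq p$ for $p\neq0$. Moreover $d_{\xi p}F=\xi^{d_i-1}$-scaled rows of $d_pF$, which equals $\xi^{-1}d_pF$ since $\xi^{d_i}=1$. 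So the two critical points $p,\xi p$ have the same image and identical differential images, violating normal crossing. Hence the multigerm at $\{p,\xi p\}$ is not stable: along a $3$-dimensional set the discriminant branches coincide, and no stable map has a $3$-dimensional double-fold locus with tangent branches. By Theorem \ref{teo2}, $F$ is not $\mathcal{A}$-finitely determined.
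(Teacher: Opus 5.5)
Your first part follows the paper's route: intersect the generic sets of Theorems \ref{teo1} and \ref{teo5}, get properness from Proposition \ref{prop1}, get infinitesimal stability off the origin from the local types plus normal crossings, then apply Theorems \ref{teo3} and \ref{teo2}. The paper packs the whole multigerm verification into one citation, \cite[Corollary 4.8]{8}. Unpacking it with Mather's general-position criterion is sound. You are also right that the proof of Lemma \ref{lemma7} covers neither cusp strata nor the antipodal pairs $\pm x$, since it works only at the linearly independent pair $(e_1,e_2)$ and only with differential images.

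Your first hope for the cusp case cannot work. The tangent space $\tau_p$ of the cusp stratum is a $2$-plane inside $d_pF(\mathbb{C}^4)$, and $d_pF(\mathbb{C}^4)\neq d_qF(\mathbb{C}^4)$ does not prevent $\tau_p=d_pF(\mathbb{C}^4)\cap d_qF(\mathbb{C}^4)$. So the fallback count is needed, and it does close. By homogeneity every stratum through $y\neq 0$ is invariant under $t\cdot y=(t^{d_1}y_1,\dots,t^{d_4}y_4)$, so all stratum tangent spaces contain $w(y)=(d_1y_1,\dots,d_4y_4)\neq 0$. Tangency is then a single condition, and you get $4+1+1+1+1=8$ conditions on $(\mathbb{C}^4)^2$: $F(p)=F(q)$, $J(F)(p)=0$, cusp at $p$, $J(F)(q)=0$, tangency.

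The same observation gives three more items your list needs:
\begin{itemize}
\item $A_2^2$ and $A_1^2A_2$ can never be in general position off the origin, because every tangent space involved contains $w(y)$. They must be excluded outright by analogous counts: $8$ conditions on $(\mathbb{C}^4)^2$ and $12$ conditions on $(\mathbb{C}^4)^3$, respectively.
\item At $A_1^3$ points the three normals all lie in the $3$-dimensional space $w(y)^{\perp}$. General position is therefore one determinant condition, which pairwise transversality does not give.
\item For $\pm x$ with mixed parities, the two images differ by $\operatorname{diag}((-1)^{d_i-1})$, not by a scalar. This is why your genericity check on the finitely many rays of Theorem \ref{teo1}(2) goes through, contrary to your sentence saying the images agree up to scaling.
\end{itemize}

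For the converse you argue differently from the paper. The paper uses \cite[Proposition D.4]{8}: for an $\mathcal{A}$-finitely determined germ, $F|_{\text{C}(F)}\colon \text{C}(F)\to\Delta(F)$ is a normalization, hence generically one-to-one, which contradicts Remark \ref{rmk1}. You instead use $F(\xi z)=F(z)$ and $d_{\xi p}F=\xi^{-1}d_pF$. These show that the bigerm at $\{p,\xi p\}$ has coincident differential images, so it fails general position, since each stratum tangent space lies in the image of the differential. You then conclude with Theorem \ref{teo2}.

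This is correct and more self-contained: it needs only Mather's criterion, and it treats the case $F^{-1}(0)\neq\{0\}$ explicitly. The paper's version is shorter, given the normalization result.

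One small fix: $J(F)\not\equiv 0$ does not follow from $d_i\geq 2$. It follows from $F$ being finite, hence dominant, once $F^{-1}(0)=\{0\}$.
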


\begin{proof}
Let $U$ be the dense Zariski open set obtained in Theorem \ref{teo1}. Since $F$ is homogeneous and $F^{-1}(0)=\{0\}$, we have from Proposition \ref{prop1} that $F$ is proper. Now, using the properties of $F$ as in Theorem \ref{teo1}, we conclude that $F$ is infinitesimally stable outside the origin by applying \cite[p. 138, Corollary 4.8]{8}, because of the singularities that it has outside the origin, $F$ is a two-to-one mapping and it satisfies the normal crossing condition. Applying Theorem \ref{teo3}, we conclude that $F$ is stable outside the origin. Therefore, by Theorem \ref{teo2}, we can conclude that the germ $F:(\mathbb{C}^4,0)\to(\mathbb{C}^4,0)$ is $\mathcal{A}$-finitely determined.

Moreover, if $\text{gcd}(d_1,d_2,d_3,d_4)>1$, suppose that there exists a $\mathcal{A}$-finitely determined germ $F:(\mathbb{C}^4,0)\to(\mathbb{C}^4,0)$. Then, as proved in Remark \ref{rmk1}, we can see that $F$ is not injective in any ray contained in $\text{C}(F)$. In \cite[p.534, Proposition D.4]{8} we see that the restriction $F\big|_{\text{C}(F)}:\text{C}(F)\to\Delta(F)$ is a normalization, and one of the conditions is that this restriction is generically one-to-one, which is a contradiction. Consequently, the germ cannot be $\mathcal{A}$-finitely determined.
\end{proof}

\subsection{Counting singularities}

Following the work from the authors in \cite{1}, we can make use of Ohmoto's results in \cite{7} and Kazarian's results in \cite{9} to obtain enumerative formulas of the discrete mono and multi-singularities of generic complex polynomial mappings from $\mathbb{C}^4$ to $\mathbb{C}^4$ in terms of its degrees $d_i$.

Let $F=(f_1,f_2,f_3,f_4)\in\Omega(d_1,d_2,d_3,d_4)$ be a complex polynomial mapping, $\overline{f_i}$ be the homogeneous part of degree $d_i$, and consider $F_0=(\overline{f_1},\overline{f_2},\overline{f_3},\overline{f_4})\in H(d_1,d_2,d_3,d_4)$. For a generic choice of the coefficients of those homogeneous parts, we can assume from Theorem \ref{teo4} that the germ $(F_0,0)$ is $\mathcal{A}$-finitely determined. In this case, consider the deformation $$F_t(x)=(t^{d_1}f_1(x/t),t^{d_2}f_2(x/t),t^{d_3}f_3(x/t),t^{d_4}f_4(x/t)).$$ We see that all of its singularities goes to the origin when we make $t\to0$, so we can apply Theorem \cite[Theorem 5.3]{7}. Consider the toric actions of $\mathbb{C}^\ast$ in $\mathbb{C}^4$ given by $$\rho_1=\alpha\oplus\alpha\oplus\alpha\oplus\alpha\text{ and }\rho_2=\alpha^{d_1}\oplus\alpha^{d_2}\oplus\alpha^{d_3}\oplus\alpha^{d_4},$$ with the respective fiber bundles $$E_1=\ell\oplus\ell\oplus\ell\oplus\ell\text{ and }E_2=\ell^{\otimes d_1}\oplus\ell^{\otimes d_2}\oplus\ell^{\otimes d_3}\oplus\ell^{\otimes d_4},$$ where $\ell$ is the tautological line bundle over some $\mathbb{P}_\mathbb{C}^N$, with $N$ sufficiently large, and we construct the universal mapping $f_0:E_1\to E_2$ such that its restriction to each fiber is $\mathcal{A}$-equivalent to $f_0:\mathbb{C}^4\to\mathbb{C}^4$. Let us denote by $a=c_1(\ell)$ and we compute the following Chern classes \begin{align*}
	c(f_0)&=c(f_0^\ast E_2-E_1)\\&=\frac{c(E_2)}{c(E_1)}\\&=\frac{(1+d_1a)(1+d_2a)(1+d_3a)(1+d_4a)}{(1+a)^4}\\&=\frac{(1+d_1a)(1+d_2a)(1+d_3a)(1+d_4a)}{1 -(-4a-6a^2-4a^3-a^4)}\\&=(1+d_1a)(1+d_2a)(1+d_3a)(1+d_4a)\left(\sum_{i=0}^\infty(-4a-6a^2-4a^3-a^4)^i\right)\\&=1 + \left(d_{1} + d_{2} + d_{3} + d_{4} - 4\right) a\\& + \left(d_{1} d_{2} + d_{1} d_{3} + d_{1} d_{4} + d_{2} d_{3} + d_{2} d_{4} + d_{3} d_{4} - 4 d_{1} - 4 d_{2} - 4 d_{3} - 4 d_{4} + 10\right) a^{2} \\&+ \left(d_{1} d_{2} d_{3} + d_{1} d_{2} d_{4} + d_{1} d_{3} d_{4} + d_{2} d_{3} d_{4} - 4 d_{1} d_{2} - 4 d_{1} d_{3} - 4 d_{1} d_{4} - 4 d_{2} d_{3} \right.\\&\left.- 4 d_{2} d_{4} - 4 d_{3} d_{4} + 10 d_{1} + 10 d_{2} + 10 d_{3} + 10 d_{4} - 20\right) a^{3} \\&+ \left(d_{1} d_{2} d_{3} d_{4} - 4 d_{1} d_{2} d_{3} - 4 d_{1} d_{2} d_{4} - 4 d_{1} d_{3} d_{4} - 4 d_{2} d_{3} d_{4} + 10 d_{1} d_{2} + 10 d_{1} d_{3}\right.\\&\left.+ 10 d_{1} d_{4} + 10 d_{2} d_{3} + 10 d_{2} d_{4} + 10 d_{3} d_{4} - 20 d_{1} - 20 d_{2} - 20 d_{3} - 20 d_{4} + 35\right) a^{4}+\cdots.
\end{align*} Furthermore, since $f_0$ maps the zero section of $E_1$ to the zero section of $E_2$, then $$c_4(E_2)=f_{0\ast}(c_4(E_1))=c_4(E_1)f_{0\ast}(1),$$ so, $$f_{0\ast}(1)=\frac{c_4(E_2)}{c_4(E_1)}=\frac{d_1d_2d_3d_4a^4}{a^4}=d_1d_2d_3d_4.$$ Thus, we obtained the Chern and Landweber-Novikov classes. To simplify the enumerative formulas, we denote by\begin{align*}
	c_1&=d_{1} + d_{2} + d_{3} + d_{4} - 4\\
	c_2&=d_{1} d_{2} + d_{1} d_{3} + d_{1} d_{4} + d_{2} d_{3} + d_{2} d_{4} + d_{3} d_{4} - 4 d_{1} - 4 d_{2} - 4 d_{3} - 4 d_{4} + 10\\
	c_3&=d_{1} d_{2} d_{3} + d_{1} d_{2} d_{4} + d_{1} d_{3} d_{4} + d_{2} d_{3} d_{4} - 4 d_{1} d_{2} - 4 d_{1} d_{3} - 4 d_{1} d_{4} - 4 d_{2} d_{3} \\&=- 4 d_{2} d_{4} - 4 d_{3} d_{4} + 10 d_{1} + 10 d_{2} + 10 d_{3} + 10 d_{4} - 20\\
	c_4&=d_{1} d_{2} d_{3} d_{4} - 4 d_{1} d_{2} d_{3} - 4 d_{1} d_{2} d_{4} - 4 d_{1} d_{3} d_{4} - 4 d_{2} d_{3} d_{4} + 10 d_{1} d_{2} + 10 d_{1} d_{3}\\&+ 10 d_{1} d_{4} + 10 d_{2} d_{3} + 10 d_{2} d_{4} + 10 d_{3} d_{4} - 20 d_{1} - 20 d_{2} - 20 d_{3} - 20 d_{4} + 35\\
	s_0&=d_1d_2d_3d_4\\
	s_1&=c_1s_0\\
	s_2&=c_1^2s_0\\
	s_3&=c_1^3s_0\\
	s_{01}&=c_2s_0\\
	s_{11}&=c_1c_2s_0\\
	s_{001}&=c_3s_0.
\end{align*} Since the discrete mono and multi-singularities in this pair of dimensions are $A_1^4$, $A_1^2A_2$, $A_1A_3$, $A_2^2$, $A_4$ and $I_{2,2}$, and their Thom polynomials are already known due to works from Kazarian in \cite{9}, we can apply Ohmoto's technique from \cite{7} to obtain the formulas written below: \begin{align*}
	\# A_1^4&=\frac{1}{24}(s_1^3c_1-12s_1s_2c_1+40s_3c_1-6s_1s_{01}c_1+56s_{11}c_1+24s_{001}c_1-12s_1^2c_1^2\\&+48s_2c_1^2+24s_{01}c_1^2+120s_1c_1^3-672c_1^4-6s_1^2c_2+24s_2c_2+12s_{01}c_2\\&+168s_1c_1c_2-1776c_1^2c_2-288c_2^2+72s_1c_3-1584c_1c_3-720c_4)\\
	\# A_1^2A_2&=\frac{1}{2}(s_2c_1+s_{01}c_1-6c_1^3-12c_1c_2-6c_3)\\
	\# A_1A_3&=s_3c_1+3s_{11}c_1+2s_{001}c_1-8c_1^4-36c_1^2c_2-8c_2^2-44c_1c_3-24c_4\\
	\# A_2^2&=\frac{1}{2}(s_2c_1^2+s_{01}c_1^2-9c_1^4+s_2c_2+s_{01}c_2-36c_1^2c_2-12c_2^2-39c_1c_3-24c_4)\\
	\# A_4&=c_1^4+6c_1^2c_2+2c_2^2+9c_1c_3+6c_4\\
	\# I_{2,2}&=c_2^2-c_1c_3
\end{align*}

\section{Conclusion}\label{sec13}

In this study, we proved that $\mathcal{A}$-finite determination is a generic condition for complex homogeneous polynomial mappings from $\mathbb{C}^4$ to $\mathbb{C}^4$ with bounded degrees when considering the Zariski Topology, in contrast to the $\mathcal{C}^\infty$-Whitney Topology.

Additionally, we provide relatively explicit conditions, both necessary and sufficient, to guarantee the $\mathcal{A}$-finite determination of a generic homogeneous polynomial mapping germ. Finally, there are enumerative formulas for their singularities, depending only on the degrees $d_i$.

\backmatter

\bmhead{Acknowledgments}

The authors wish to extend their heartfelt appreciation to FAPESP (São Paulo Research Foundation) for their invaluable partial support, which played an important role in the advancement of this scientific research. The first and third authors have immensely benefited from grant 2019/21181-0, while the second author received important support through grants 2020/14442-9 and 2023/03086-5. The funding provided by FAPESP has been fundamental in the successful execution of this research, and we are deeply grateful for their continued commitment to aiding scientific progress.
\bibliography{sn-bibliography}

\end{document}